\definecolor{R}{rgb}{1,0,0}
\definecolor{G}{rgb}{0,1,0}
\definecolor{B}{rgb}{0,0,1}
\definecolor{bl}{rgb}{0,0,0}
\definecolor{Wh}{rgb}{1,1,1}
\newcommand{\be} {\begin{eqnarray}}
\newcommand{\ee} {\end{eqnarray}}
\newcommand{\bep} {\begin{eqnarray*}}
\newcommand{\eep} {\end{eqnarray*}}
\newcommand {\Hol}{\mathop{\rm Hol}\nolimits}
\renewcommand {\Im}{\mathop{\rm Im}\nolimits}
\renewcommand {\Re}{\mathop{\rm Re}\nolimits}
\newcommand {\GP}{\mathcal{G}_{\alpha}(\Pi)}
\newcommand{\R}{{\mathbb R}}
\newcommand{\N}{{\mathbb N}}
\newcommand{\C}{{\mathbb C}}
\newcommand {\D}{\mathbb{D}}
\newtheorem{remar}{Remark}[section]
\newtheorem{examp}{Example}[section]
\newtheorem{defin}{Definition}[section]
\newtheorem{corol}{Corollary}[section]
\newtheorem{propo}{Proposition}[section]
\newtheorem{theorem}{Theorem}[section]
\newtheorem{lemma}{Lemma}[section]
\newtheorem{remark}{Remark}[section]
\newcommand{\rema}{\begin{remar}\rm}
\newcommand{\erema}{$\blacktriangleright$\end{remar}}
\newcommand{\exa}{\begin{examp}\rm}
\newcommand{\eexa}{$\blacktriangleright$\end{examp}}
\def\lwvec(#1 #2){\linewd 0.1
           \lvec(#1 #2)
           \linewd 0.05}
\begin{document}

\title{Analyticity of semigroups on the right half-plane}

\author[M. Elin]{Mark Elin}

\address{Department of Mathematics,
         Ort Braude College,
         Karmiel 21982,
         Israel}

\email{mark$\_$elin@braude.ac.il}

\author[F. Jacobzon]{Fiana Jacobzon}

\address{Department of Mathematics,
         Ort Braude College,
         Karmiel 21982,
         Israel}

\email{fiana@braude.ac.il}

\begin{abstract}

This paper is devoted to the study of semigroups of composition
operators and semigroups of holomorphic mappings. We establish
conditions under which these semigroups can be extended in their
parameter to sector given a priori. We show that the size of this
sector can be controlled by the image properties of the
infinitesimal generator, or, equivalently, by the geometry of the
so-called associated planar domain. We also give a complete
characterization of all composition operators acting on the Hardy
space $H^p$ on the right half-plane.

{\footnotesize Key words and phrases: holomorphic mapping,
semigroup, composition operator, Hardy space, function convex in
one direction.

2010  Mathematics Subject Classification: 47D03, 47B33, 30H10,
47H20, 30C45}

\end{abstract}
\maketitle

\section{Introduction}

In this paper, we study semigroups of holomorphic self-mappings of
the right half-plane $\Pi=\{z\in\C:\Re z >0\}$ and semigroups of
bounded linear operators acting on the Hardy spaces $H^p(\Pi), \
p>1$, and their interaction.

Recall that a one-parameter family $\mathbf{T}:=\{{T}(t),t \geq
0\}$ of bounded linear operators on a Banach space $X$ is said to
be a {\it $C_0$-seimgroup} if it satisfies
\begin{itemize}
\item[(a)] ${T}(t){T}(s)= T(t+s)$ for all $t,s\geq 0$,

\item[(b)] $\lim\limits_{t\to0^+}T(t)=T(0)=I$, where $I$ is the
identity operator on $X$.
\end{itemize}

For each $C_0$-semigroup $\mathbf{T}$ on $X$, there exist scalars
$a \geq 0$ and $M \geq 1$ such that $\|T(t)\|\leq Me^{at}$ for all
$t \geq 0$. If $\|T(t)\|\leq 1$ for all $t \geq 0$, then the
$\mathbf{T}$ is said to be a contractive semigroup.

Denote $\mathcal{D}:=\left\{x \in X: \lim\limits_{t \to
0}\dfrac{T(t)x-x}{t} \mbox{ exists} \right\}$. The linear operator
$\Gamma$ defined on $\mathcal{D}$ by
\[\Gamma
x:=\lim\limits_{t \to 0}\frac{T(t)x-x}{t} =\frac{dT(t)x}{dt}\] is
called the {\it infinitesimal generator} of the semigroup
$\mathbf{T}$. The domain $\mathcal{D}$ is a dense linear subspace
of $X$ (see, for example \cite{Pa}).

Let $X^*$ be the dual space to $X$. Recall that a linear operator
$\Gamma$ (in general, unbounded) defined on a dense subspace
$\mathcal{D}\subset X$ is called {\it dissipative} if for all $\,
x\in\mathcal{D}\,$, $\|x\|=1$, there exists $\, x^*\in X^* \,$
with $\,\langle x,x^*\rangle=$, $\|x^*\|=1$ which satisfies
\[
\Re\langle\Gamma x,x^*\rangle \leq 0.
\]
The functional $x^*$, is not unique in general (see, for example,
\cite{Pa}); it is often called a support functional at the point
$x$.

The classical Lummer--Phillips theorem (see \cite{Pa},
\cite{R-S1}) asserts that {\it an operator $\Gamma$ generates a
contractive semigroup if and only if $\Gamma$ is dissipative and
$\lambda I-\Gamma $ is surjective for some (hence for all)
$\lambda > 0$.} Moreover, if $\lambda I-\Gamma $ is surjective for
every $\lambda
> 0$, then $\mu I-\Gamma $ is surjective for every $\mu \in \C$
such that $\Re \mu > 0$ (see, for example, \cite[Remark 5.4]{Pa}).

An important class of bounded linear operators which links to
Complex Analysis and attracts special attention consists of
composition operators. Each holomorphic self-mapping $F\in\Hol(D)$
of a domain $D\subset\C$ induces a composition operator $C_F$
defined by
\[
C_F:\varphi\mapsto \varphi\circ F
\]
on the Frech\`et space $\Hol(D,\C)$. Composition operators make up
a wide class of explicit examples in Operator Theory. Quoting
\cite{CCC-BMC}, ``\ldots they are surprisingly general and occur
in settings other than the obvious ones.'' In order to consider
composition operators as a part of Operator Theory, it is
necessary to study properties of their restrictions to different
Banach spaces $X\subset\Hol(D,\C).$

We proceed by reviewing several concepts and facts related to
semigroups of holomorphic self-mappings of a domain $D\subset\C$.
Throughout this paper, we denote the set of holomorphic functions
on $D$ taking values in a set $U$ by $\Hol(D,U)$ and let
$\Hol(D):=\Hol(D,D)$.

\textit{A one-parameter continuous semigroup} (semigroup, for
short) on $D$ is  a family $\mathbf{F}=\left\{F_t\right\}_ {t\geq
0} \subset\Hol(D)$ such that
\begin{itemize} \item[(i)] $F_{t}(F_{s}(z))= F_{t+s}(z)$ for all
$t,s\geq 0$ and $z\in D$,

\item[(ii)] $\lim\limits_{t\rightarrow 0^+}F_{t}(z)=z$ for all
$z\in D$.
\end{itemize}
It follows by \cite{B-P} that each semigroup on a simply connected
domain $D$ is differentiable with respect to $t\in\R^{+}=[0,\infty
)$. Thus, for each one-parameter continuous semigroup, the limit
\[
\lim_{t\rightarrow 0^{+}}\frac{F_{t}(z)-z}{t}:=f(z),\quad z\in D,
\]
exists and defines a holomorphic function $f\in\Hol(D,\C)$ called
the \textit{(infinitesimal) generator of} $\mathbf{F}$. Moreover,
the function $u:\mathbb{R}^{+}\times D\to \C$ defined by
$u(t,z):=F_{t}(z)$, is the unique solution of the Cauchy problem
\begin{equation}\label{cauchy}
\left\{
\begin{array}{l}
{\displaystyle\frac{\partial u(t,z)}{\partial t}} = f(u(t,z))
,\vspace{2mm}\\ u(0,z)=z,\quad z\in D.
\end{array}
\right.
\end{equation}

Since all simply connected domains are holomorphically equivalent,
one usually considers semigroups on either the right half-plane
$\Pi$, or the open unit disk $\D$ (see \cite{SD}). It is well
known that for a semigroup $\mathbf{F}$ on $\Pi$ that contains
neither the identity nor an elliptic automorphism, there exists a
unique $\tau\in\overline {\Pi}\cup\{\infty\}$, called the
Denjoy--Wolff point of $\mathbf{F}$, to which $F_t(z)$ converges
as $t \to \infty$ for all $z \in \Pi$ (see \cite{SD} for details).

In this paper, we study semigroups on the right half-plane$\Pi$
with the Denjoy--Wolff point at $\infty$. The generators of such
semigroups satisfy $\Re f(z)\ge 0$; and by Julia's Lemma, $\Re
F_t(z)$ is a non-decreasing function in $t$ for $t\ge0$ (see, for
example, \cite{SD}).

Clearly, each semigroup $\mathbf{F}= \{F_t\}_{t\geq
0}\subset\Hol(D)$ induces the semigroup $\mathbf{C}=\{C_t,t
\ge0\}$ of composition operators defined by
\begin{equation}
C_F:\varphi\mapsto \varphi\circ F_t
\end{equation}
on the Frech\`et space $\Hol(D,\C)$. Although composition
operators have a long history and diverse applications (see, for
example, \cite{CCC-BMC} and \cite{SJH-93} and references therein),
interest in semigroups of composition operators essentially began
with the paper \cite{B-P} by Berkson and Porta and the review
\cite{Sis} by Siskakis. During the last two decades, various
properties of semigroups of composition operators have been
studied by many mathematicians for different Banach spaces
$X\subset\Hol(D,\C)$ (such as the Dirichlet space, the Hardy
spaces, VMOA etc.) in the case $D=\mathbb{D}$, the open unit disk;
see, for example, \cite{SJH-93, Sis, A-C-P1, B-others}. In
particular, the problem of finding conditions for analytic
extension of semigroups of composition operators mentioned above
was considered in \cite{A-C-P, E-S-T}.

However, relatively little is known about semigroups of
composition operators on Banach spaces of holomorphic functions in
the right half-plane $\Pi$. It turns out that properties of even
single composition operators on Banach spaces of holomorphic
functions on $\Pi$ are different from analogous properties of
composition operators on $\D$. In particular, each composition
operator on the Hardy space $H^p(\D)$, is bounded, but the
composition operator $C_F$  on $H^p(\Pi)$ is bounded if and only
if the inducing function $F$ has a finite angular derivative at
infinity; see \cite{E-J-12}. Also, there is no compact composition
operator on $H^p(\Pi)$; see \cite{Mat1} and \cite{C-HD03}.

The paper is organized as follows. Section 2 contains auxiliary
results that are partially equivalent to some known statements
obtained in differing settings. We prove them for the sake of
completeness.

Sections 3 and 4 contain the main results. Namely, in Section~3,
we study semigroups of holomorphic self-mappings of $\Pi$ whose
generators have argument controlled by functions given a priori.
We find sectors to which such a semigroup $\mathbf{F}$
$=\left\{F_t\right\}_ {t\geq 0}$ as well as its restrictions to
invariant subsets can be analytically extended in the semigroup
parameter. We also describe domains (depending on $z\in\Pi$) which
contain the semigroup trajectory $\left\{F_t(z),\ t\geq
0\right\}$.

In Section 4, we deal with semigroups of composition operators on
the Hardy space $H^p(\Pi),\ \ p\in(1,\infty)$. Recall that
\[
H^p(\Pi)=\left\{\varphi\in\Hol(\Pi,\C): \|\varphi \|=\sup_{x>0}
\left(\frac1\pi \int_{-\infty}^{\infty} |\varphi(x+iy)|^p dy
\right)^{1/p} <\infty\right\}.
\]
Thus, we amplify results obtained earlier in \cite{Arv} and
\cite{A-C-P}. For a given semigroup of composition operators on
$H^p(\Pi)$, the main result describes its analytic extension to a
sector. Roughly speaking, principal parts of
Theorems~\ref{sector-basic} and \ref{th-operator-sem} can be
combined as follows. For an open sector $\Omega\subset\C$ with
vertex at zero, let $\widetilde{\Omega}=\left\{w\in\C: {\Re(zw)>0}
\ \mbox{ for all }{z\in\Omega}\right\}$. We prove that under some
mild conditions on an infinitesimal generator $f\in\Hol(\Pi)$ and
a sector $\Omega$, {\it the following assertions are equivalent:

\begin{itemize}
\item[$\bullet$] the image $f(\Pi)$ is contained in $\widetilde
\Omega$;

\item[$\bullet$] the semigroup $\mathbf{F}$ generated by $f$ can
be analytically extended in its parameter to the sector $\Omega;$

\item[$\bullet$] the conformal mapping onto the planar domain
(associated with $\mathbf{F}$) is convex in every direction in
$\Omega$;

\item[$\bullet$] the semigroup $\mathbf{T}$ of composition
operators induced by $\mathbf{F}$ can be analytically extended in
its parameter to the sector $\Omega$;

\item[$\bullet$] for each $\zeta \in \Omega$, the linear operator
$\zeta f\frac{\partial}{\partial z}$ generates a contractive
semigroup.
\end{itemize} }
Additionally, Theorem~\ref{th-charact-comp-operator}, gives a
criterion for a bounded linear operator acting on $H^p(\Pi)$ to be
a composition operator.

We mention again an essential difference between semigroups of
composition operators on Hardy spaces on $\D$ and those on Hardy
spaces on $\Pi$. Semigroups preserving the origin in $\D$ as well
as induced composition semigroups on Hardy spaces were considered
in \cite{E-S-T}. One of the results of that work implies that any
semigroup of holomorphic self-mappings restricted to invariant
subsets (which are disks of radii $r<1$) can be analytically
extended to a sector, and the sector is wider the smaller $r$ is.
Consequently, the induced composition semigroup can be extended to
a wider sector too. However, each analytic semigroup of bounded
composition operators on $H^p(\Pi)$ is induced by a semigroup of
parabolic type; see Corollary \ref{last}. In addition, as we show,
the restriction of a semigroup to invariant subsets does not
increase the size of the sector of analyticity in general.

\section{Auxiliary results}
\setcounter{equation}{0}

\subsection{Functions convex in one direction}

Let $\mathbf{F}=\{F_t\}_{t\ge0}\subset\Hol(\Pi)$ be a semigroup.
It is known that in the case $\mathbf{F}$ has no interior fixed
points, Abel's functional equation
\begin{equation}\label{abel1}
h\left(F_t(z)\right)=h(z)+t
\end{equation}
has a unique up to a constant solution $h\in\Hol(\Pi,\C)$ which is
a univalent function (see, for example, \cite{E-S-book, SD}). The
solution normalized by $h(1)=0$ is called the associated
K{\oe}nigs function of $\mathbf{F}.$ Differentiating
\eqref{abel1}, we conclude that
\begin{equation}\label{h-1}
h'(z)f(z)=1.
\end{equation}

In connection with equation \eqref{abel1}, recall that a domain
$D\subset\C$ is convex in the positive direction of the real axis,
if $w+t\in D$ for all $w\in D$ and $t\ge0$. The study of univalent
mappings onto such domains was initiated by Robertson and
continued by many mathematicians; see \cite{goodman} and
\cite{E-S-book} for appropriate references and historical details.

\begin{defin}[cf., {\upshape{\cite[Definition 3.5]{E-S-book}}}]\label{def-convex-one-dir}
We say that a univalent function $h\in\Hol(\Pi,\C)$ normalized by
$h(1)=0$ is convex in direction $e^{i\theta}$, $\ \theta\in\R$, if
for each $z \in \Pi$,
\begin{equation}
h(z)+te^{i\theta} \in h(\Pi) \text{ for each }t>0 \quad
\mbox{and}\quad \lim \limits_{t \to
\infty}h^{-1}\left(h(z)+te^{i\theta}\right)=\infty.
\end{equation}
We denote the class of all such functions by $\Sigma(\Pi,\theta)$
and let $\Sigma(\Pi):=\Sigma(\Pi, 0)$.
\end{defin}

\begin{remark}\label{rem-outside_point}
It follows immediately from Definition~\ref{def-convex-one-dir}
that ${h\in\Sigma(\Pi,\theta)}$ if and only if
$\xi-te^{i\theta}\not\in h(\Pi)$ for all $\xi\not\in h(\Pi)$ and
$t\ge0$.
\end{remark}

In the sequel, we use the following characterization of functions
univalent in  $\Pi$ and convex in the positive direction of the
real axis.

\begin{lemma}\label{lem-conv-cond}
A univalent function $h\in \Hol (\Pi, \C)$ is convex in the
positive direction of the real axis if and only if $\Re
h^\prime(z)\geq 0.$
\end{lemma}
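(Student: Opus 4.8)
The plan is to exploit the fact that convexity of the image domain $h(\Pi)$ in the positive real direction is a purely geometric condition that can be translated, via the univalence of $h$, into a condition on the boundary behavior of $h$, and ultimately into the sign condition $\Re h'(z)\ge 0$. For the easy direction, suppose $\Re h'(z)\ge 0$ on $\Pi$. Fix $z_0\in\Pi$ and consider the curve $t\mapsto h^{-1}(h(z_0)+t)$ as long as it stays in $\Pi$; call its domain of definition $[0,T)$ with $T\le\infty$. Differentiating, the velocity of $z(t):=h^{-1}(h(z_0)+t)$ is $\dot z(t)=1/h'(z(t))$, so $\Re\dot z(t)=\Re\overline{h'(z(t))}/|h'(z(t))|^2\ge 0$. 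Hence $\Re z(t)$ is nondecreasing, so the trajectory cannot leave $\Pi$ through the imaginary axis; since $h^{-1}$ maps into $\Pi$ and $h$ is univalent, the only way the trajectory can fail to be defined for all $t$ is by escaping to $\infty$, and in that case the limit in Definition~\ref{def-convex-one-dir} holds automatically. If it is defined for all $t\ge 0$, then $\Re z(t)$ is nondecreasing and bounded below, and one checks (using that $h(z(t))=h(z_0)+t\to\infty$ while $h$ is finite on $\Pi$) that $z(t)$ must converge to the boundary, which forces $z(t)\to\infty$ because $\Re z(t)$ stays away from $0$. This gives $h(z_0)+t\in h(\Pi)$ for all $t>0$ together with the required limit, i.e.\ $h$ is convex in the positive real direction.

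For the converse, suppose $h$ is convex in the positive real direction but $\Re h'(z_1)<0$ at some $z_1\in\Pi$. The idea is to run the same ODE $\dot z=1/h'(z)$ but now \emph{backwards}: since $\Re h'(z_1)<0$, for small $t>0$ the point $h^{-1}(h(z_1)-t)$ has real part larger than $\Re z_1$, so it lies in $\Pi$; this shows that, locally, moving in the \emph{negative} real direction from $h(z_1)$ also stays inside $h(\Pi)$. Combined with Remark~\ref{rem-outside_point} (which says convexity in direction $1$ is equivalent to $\xi - t\notin h(\Pi)$ whenever $\xi\notin h(\Pi)$), one obtains a contradiction: one constructs a boundary point $\xi$ of $h(\Pi)$ (a limit of $h(z_1)-t_n$ as the backward trajectory first exits $\Pi$, necessarily through the imaginary axis since $\Re$ is now decreasing along it) together with a point $\xi - s$ that still lies in $h(\Pi)$ for some $s>0$, contradicting the remark. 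Alternatively, and perhaps more cleanly, one argues directly: convexity in the positive real direction means $h(\Pi)+t\subset h(\Pi)$ for all $t\ge 0$, hence $h(\Pi)$ is an increasing union of its translates and $h(\Pi)-t$ is a \emph{decreasing} family; for each $z$, $h^{-1}(h(z)-t)$ is defined for $t$ in a maximal interval and, where defined, satisfies $\frac{d}{dt}\Re h^{-1}(h(z)-t) = -\Re\bigl(1/h'(\,\cdot\,)\bigr)$; the sign hypothesis would make this positive at $z_1$, pushing the backward trajectory deeper into $\Pi$ rather than out — one then has to show this is incompatible with the geometry, e.g.\ by noting that the forward trajectory from such an interior point would have to return, violating univalence of $h$ via the Abel-type relation $h(z(t)) = h(z_0)+t$.

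The main obstacle is the converse direction, and specifically the topological/boundary argument needed to turn the local sign information $\Re h'(z_1)<0$ into a genuine contradiction with convexity: one must rule out the possibility that the backward trajectory, though initially moving to the right, curves around and eventually exits $\Pi$ in a way consistent with $h(\Pi)$ being convex in direction $1$. I expect the cleanest route is to use univalence of $h$ decisively — the flow $z\mapsto h^{-1}(h(z)+t)$ is injective for each fixed $t$ (wherever defined) because $h$ is injective, so trajectories cannot cross; this rigidity, together with the monotonicity of $\Re z(t)$ that the sign hypothesis would produce on \emph{part} of $\Pi$ and its failure on another part, yields the contradiction. A subtle point to handle carefully is the behavior at $\infty$ and the precise meaning of the limit condition in Definition~\ref{def-convex-one-dir}, which is exactly what distinguishes "convex in direction $e^{i\theta}$" in this half-plane setting from the naive condition $h(\Pi)+t\subset h(\Pi)$; the proof must verify that the ODE trajectories, when they are globally defined, actually tend to $\infty$ and not merely to some other boundary point.
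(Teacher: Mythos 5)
Your proposal is a program rather than a proof, and both directions contain genuine gaps. For comparison: the paper disposes of the lemma in three lines by conjugating with the Cayley transform $C(w)=(1+w)/(1-w)$, noting $g'(w)(1-w)^2=2h'(C(w))$ for $g=h\circ C$, and citing the known disk criterion ($g$ convex in the positive real direction iff $\Re\bigl[(1-w)^2g'(w)\bigr]\ge 0$, \cite[Lemma 3.8]{E-S-book}). Your ODE approach amounts to reproving that cited lemma from scratch; that is a substantive piece of geometric function theory, and the two places where you defer the work are exactly where the substance lies.

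Concretely: in the direction $\Re h'\ge 0\Rightarrow$ convexity, your trajectory $z(t)=h^{-1}(h(z_0)+t)$ is defined only on a maximal interval $[0,T)$, and you do not rule out $T<\infty$. If $T<\infty$, the trajectory escapes to $\infty$ in finite time and $h(z_0)+T$ need not lie in $h(\Pi)$, so it is the \emph{membership} clause of Definition~\ref{def-convex-one-dir} that is threatened; your remark that in this case ``the limit in the Definition holds automatically'' addresses the wrong clause. Excluding finite-time escape is equivalent to knowing that $f=1/h'$ (which satisfies $\Re f\ge 0$) generates a globally defined semigroup on $\Pi$ --- the half-plane form of the Berkson--Porta theorem --- and your monotonicity-of-$\Re z(t)$ argument does not deliver that. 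In the converse direction you explicitly stop short and never convert $\Re h'(z_1)<0$ into a violation of the definition. Note that the limit clause is indispensable there: $h(z)=-\log z$ maps $\Pi$ univalently onto the strip $\{|\Im w|<\pi/2\}$, so $h(\Pi)+t\subset h(\Pi)$ for all $t\ge 0$, yet $\Re h'(z)=-\Re z/|z|^2<0$ everywhere; the only clause that fails is $h^{-1}(h(z)+t)=ze^{-t}\to 0\neq\infty$. Any completed converse must therefore run the contradiction through the limit condition (equivalently, through the location of the distinguished prime end at which the translates accumulate), which neither of your two sketches does. If you want a self-contained argument, the cleanest route remains the paper's: transfer to $\D$ and invoke, or reprove, the classical criterion for functions convex in one direction.
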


\begin{proof}
Let $C\in\Hol(\D,\Pi)$ be the Cayley transform given by
$C(w)=(1+w)/(1-w)$. The function $g=h\circ C$ is analytic in $\D$.
Differentiating it, we obtain
\[
g'(w)=h'(C(w))C'(w)=\frac{2h^\prime(C(w))}{(1-w)^2}.
\]
By \cite[Lemma 3.8]{E-S-book}, $g$ is convex in the positive
direction of the real axis if and only if $\Re g^\prime(w)(1-w)^2
\geq 0.$ Thus $h$ is convex in the positive direction of the real
axis if and only if $\Re h^\prime(z)\geq 0, \, z\in \Pi.$
\end{proof}

In what follows, we deal with functions which are convex in every
direction in some sector. Namely, for $0<\theta_1,\theta_2$ with
$\theta_1+\theta_2\leq \pi,$ we define
\begin{equation}\label{conv-in-cone}
\Sigma(\Pi,\theta_1,\theta_2)= \bigcap_{
\theta\in(-\theta_1,\theta_2)}\Sigma(\Pi,\theta)
\end{equation}
and
\begin{equation}\label{sector}
\Omega(\theta_1,\theta_2) = \left\{\zeta: -\theta_1< \arg \zeta <
\theta_2 \right\}.
\end{equation}
Clearly, each function $h \in \Sigma(\Pi,\theta_1,\theta_2)$
satisfies
\begin{equation}\label{in-cone}
h(z)+ \overline{\Omega(\theta_1,\theta_2)}\subset h(\Pi)
\end{equation}
for all $z\in\Pi$. We complete this fact as follows (cf.
\cite{Alecko}).

\begin{propo}\label{prop-conv-in-sector}
Let $h\in\Sigma(\Pi)$. Suppose that
\[
h(z)+ \Omega(\theta_1,\theta_2)\subset h(\Pi) \quad \text{for all
}z \in \Pi.
\]
Then $h \in \Sigma(\Pi,\theta_1,\theta_2).$
\end{propo}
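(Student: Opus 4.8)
The plan is to verify that $h\in\Sigma(\Pi,\theta)$ for every $\theta\in(-\theta_1,\theta_2)$; by \eqref{conv-in-cone} this is exactly the assertion $h\in\Sigma(\Pi,\theta_1,\theta_2)$. So fix $\theta\in(-\theta_1,\theta_2)$; note that $h$ is univalent and normalized by $h(1)=0$ since $h\in\Sigma(\Pi)$. The first requirement of Definition~\ref{def-convex-one-dir}, that $h(z)+te^{i\theta}\in h(\Pi)$ for all $z\in\Pi$ and $t>0$, is immediate from the hypothesis: since $-\theta_1<\theta<\theta_2$ we have $te^{i\theta}\in\Omega(\theta_1,\theta_2)$ for every $t>0$, hence $h(z)+te^{i\theta}\in h(z)+\Omega(\theta_1,\theta_2)\subset h(\Pi)$. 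Thus the only real content is the limit condition $\lim_{t\to\infty}h^{-1}\bigl(h(z)+te^{i\theta}\bigr)=\infty$, and this is the point where I must genuinely use the assumption $h\in\Sigma(\Pi)$, not merely the geometry of the domain $h(\Pi)$.

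To obtain the limit condition I would compare two semigroups. Since $h\in\Sigma(\Pi)$, the maps $F_t(z):=h^{-1}\bigl(h(z)+t\bigr)$ are well defined holomorphic self-maps of $\Pi$ for $t\ge0$; they satisfy $h(F_t(z))=h(z)+t$, hence $F_t\circ F_s=F_{t+s}$, and $F_t(z)\to z$ as $t\to0^+$ by continuity of $h^{-1}$, so $\mathbf F=\{F_t\}$ is a one-parameter continuous semigroup on $\Pi$; moreover $h\in\Sigma(\Pi)$ says precisely that $F_t(z)\to\infty$ as $t\to\infty$ for every $z$, i.e. the Denjoy--Wolff point of $\mathbf F$ is $\infty$. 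By the first paragraph, the maps $G_s(z):=h^{-1}\bigl(h(z)+se^{i\theta}\bigr)$ are likewise well defined holomorphic self-maps of $\Pi$ for $s\ge0$ and form a one-parameter continuous semigroup $\mathbf G=\{G_s\}$. The two semigroups commute, since $F_t(G_s(z))=h^{-1}\bigl(h(z)+se^{i\theta}+t\bigr)=G_s(F_t(z))$. Also, for $t>0$ (resp.\ $s>0$) the map $F_t$ (resp.\ $G_s$) is neither the identity nor an elliptic automorphism: an interior fixed point $p$ would force $h(p)+t=h(p)$ (resp.\ $h(p)+se^{i\theta}=h(p)$), which is impossible.

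The final step is to invoke the classical fact that two commuting one-parameter continuous semigroups on $\Pi$, neither of which is the identity or contains an elliptic automorphism, have the same Denjoy--Wolff point; equivalently, one applies the commutation theorem for fixed-point-free self-maps to the time-one maps $F_1$ and $G_1$, together with the observation that the Denjoy--Wolff point of a semigroup coincides with that of each of its time-$t$ maps, $t>0$ (see, e.g., \cite{SD}). Since $\mathbf F$ has Denjoy--Wolff point $\infty$, so does $\mathbf G$, i.e. $G_s(z)=h^{-1}\bigl(h(z)+se^{i\theta}\bigr)\to\infty$ as $s\to\infty$ for every $z\in\Pi$. This is the missing limit condition, so $h\in\Sigma(\Pi,\theta)$ for every $\theta\in(-\theta_1,\theta_2)$, whence $h\in\Sigma(\Pi,\theta_1,\theta_2)$. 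I expect the main obstacle to be exactly this limit condition: convexity of $h(\Pi)$ in the direction $e^{i\theta}$ by itself does \emph{not} prevent the $e^{i\theta}$-flow from converging to a finite boundary point of $\Pi$, so the behaviour of the $0$-direction flow — which $h\in\Sigma(\Pi)$ controls — has to be transported to the direction $e^{i\theta}$, and commutation of the two flows is the natural mechanism for this. A more computational alternative — fix $z$, suppose the curve $s\mapsto h^{-1}(h(z)+se^{i\theta})$ accumulated at some finite point of $\partial\Pi$, and derive a contradiction from the fact that a whole open subsector of directions around $e^{i\theta}$ lies in $\Omega(\theta_1,\theta_2)$ together with $h\in\Sigma(\Pi)$ — looks harder to push through cleanly than the commutation argument.
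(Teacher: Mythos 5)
Your reduction is right: the containment $h(z)+te^{i\theta}\in h(\Pi)$ is immediate from the hypothesis, and the whole content is the limit condition. Your mechanism for it --- commuting the $e^{i\theta}$-flow $G_s$ with the $0$-direction flow $F_t$ that $h\in\Sigma(\Pi)$ controls --- is a genuinely different route from the paper's. The paper instead inverts the complement of $h(\Pi)$ to get a bounded univalent $g$ on $\D$, identifies the limit of the $0$-direction trajectory with the unique accessible point of the prime end $P(1)$ via Carath\'eodory's theorem, and uses a chain of crosscuts to force every trajectory with direction in $[-\theta_1,\theta_2]$ to the same accessible point. Your dynamical argument is shorter and avoids prime ends, provided the commutation theorem you appeal to actually holds.

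It does not hold as you state it, and that is a genuine gap. Behan's theorem for commuting fixed-point-free self-maps has an exceptional case: two commuting maps, neither the identity nor an elliptic automorphism, can have \emph{different} Denjoy--Wolff points when both are hyperbolic automorphisms with the same two-point fixed-point set and interchanged attracting/repelling points. The same happens for semigroups: on $\Pi$ the flows $F_t(z)=e^{t}z$ and $G_s(z)=e^{-s}z$ commute, contain no identity or elliptic automorphism for positive times, yet have Denjoy--Wolff points $\infty$ and $0$. So before transferring the Denjoy--Wolff point from $\mathbf F$ to $\mathbf G$ you must exclude this case. In your setting that is easy but must be said: if $F_1=h^{-1}(h(\cdot)+1)$ were an automorphism, then each $F_t$, $0\le t\le1$, would be one too (it is univalent, and surjective since $F_t\circ F_{1-t}=F_1$ is), so $\{F_t\}$ would be a group of hyperbolic automorphisms and its K{\oe}nigs function $h$ would map $\Pi$ onto a horizontal strip of finite width; but the hypothesis gives $h(1)+\Omega(\theta_1,\theta_2)=\Omega(\theta_1,\theta_2)\subset h(\Pi)$, and an open sector with $\theta_1,\theta_2>0$ lies in no such strip. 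With that sentence added, your argument closes and is a valid alternative to the paper's prime-end proof.
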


\begin{proof}
Fix $\theta\in (-\theta_1,\theta_2)$. Note that the formula
\[
\Psi_t(z)=h^{-1}\left( h(z)+ te^{i\theta} \right)
\]
defines a one-parameter continuous semigroup
$\{\Psi_t\}_{t\ge0}\subset\Hol(\Pi).$ Note that this semigroup has
no interior fixed point. Indeed, if $z_0\in\Pi$ is its fixed point
$z_0\in\Pi$, then
\[
h(z_0)=h(z_0)+ te^{i\theta}
\]
for all $t>0$, which is impossible. Hence,
$\{\Psi_t\}_{t\ge0}\subset\Hol(\Pi)$ has the boundary
Denjoy--Wolff point $\zeta.$ We have to show that $\zeta=\infty$.

Let $a_0\not\in h(\Pi)$. By Remark~\ref{rem-outside_point},
$a_0-te^{i\beta}\not\in h(\Pi)$ for all $\beta \in
\left(-\theta_1, \theta_2\right)$ and then $a=a_0-1$ has a
neighborhood contained in $\C\setminus h(\Pi)$. Thus the function
$g\in\Hol(\D, \C)$ given by
\begin{equation}\label{g}
g(w)=\frac{1}{h\left(\frac{1+w}{1-w}\right)-a}
\end{equation}
is univalent and bounded in $\D$. Moreover, $g$ satisfies
\begin{itemize}
\item[(i)] $0\in\partial g(\D)$;

\item[(ii)] $\dfrac{g(w)}{1-te^{i\theta}g(w)} \in g(\D)$ whenever
$t\geq0$ and $\theta\in [-\theta_1,\theta_2]$;

\item[(iii)] $\lim\limits_{t \to \infty} g^{-1}\left(
\dfrac{g(w)}{1-tg(w)}\right) =1$.
\end{itemize}

By the Carath\'eodory Theorem (see, for example, \cite{C-L}),  $g$
induces the one-to-one correspondence between points of the unit
circle $\partial\D$ and prime ends of $g(\D)$. By property (iii),
the impression $I(P(1))$ of the prime end $P(1)$ contains the
point $\zeta_0=0$, which is its unique accessible point.

Let $\{C_n\}$ be a chain of crosscuts defining $P(1)$. For a fixed
$w\in\D$, we can assume without loss of generality that each $C_n$
separates $g(w)$ and $\zeta_0$. By property (ii),
\[
G_w:=\left\{\dfrac{g(w)}{1-te^{i\theta}g(w)}:\ t\geq0 \; \theta\in
[-\theta_1,\theta_2]\right\}\subset g(\D).
\]
Thus, each crosscut $C_n$ intersects every path joining $g(w)$ and
$\zeta_0$ that, with the exception of its end point $\zeta_0$,
lies inside $G_w$. In particular, each $C_n$ intersects the
trajectory $\Gamma_w =\left\{ \dfrac{g(w)}{1-te^{i\theta}g(w)} :t
\geq 0 \right\}$, which tends to $0$ as $t\to\infty$. So this
trajectory tends to the same unique accessible point $\zeta_0=0$
of the prime end $P(1)$. Bearing in mind \eqref{g}, we see that
the last fact coincides with the assertion. \end{proof}

\subsection{Semigroup generators in the right half-plane}

Here we present some necessary and sufficient conditions for a
function $f\in\Hol(\Pi,\C)$ to be a semigroup generator.

\begin{propo}\label{propo-flow-inv}
Let $f\in\Hol(\Pi,\C)$. Then $f$ is a semigroup generator if and
only if
\be\label{flow-inv} %
\Re\left(f(z)\frac{\overline{z}-1}{z+1} \right) \le \Re z\cdot
\Re\left(f(1)\frac{\overline{z}-1}{\overline{z}+1} \right)
\ee%
for all $z\in\Pi$.
\end{propo}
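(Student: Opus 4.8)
The statement characterizes semigroup generators on $\Pi$ with Denjoy–Wolff point at infinity (or at least it should, given the setup), so the natural route is to transfer everything to the unit disk via the Cayley transform and invoke a known parametric description of generators there — the Berkson–Porta representation. Concretely, I would set $C(w)=(1+w)/(1-w)$, so $C:\D\to\Pi$, and to a candidate $f\in\Hol(\Pi,\C)$ associate $g\in\Hol(\D,\C)$ by the change of variables that turns the vector field $f\,\partial_z$ into a vector field on $\D$, namely $g(w) = f(C(w))/C'(w) = f(C(w))\cdot\tfrac{(1-w)^2}{2}$. Then $f$ generates a semigroup on $\Pi$ if and only if $g$ generates a semigroup on $\D$, and by the Berkson–Porta theorem $g$ is a generator if and only if $g(w) = (1-w)(\overline{\tau}w - 1)\,p(w)$ (in appropriate normalization) or, in the form that avoids specifying the Denjoy–Wolff point, if and only if a certain inequality of the type $\Re\!\big(g(w)\cdot\overline{(\text{something})}\big)\le 0$ holds — essentially the "range" condition for infinitesimal generators on $\D$.

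First I would recall the clean disk-version criterion: $g$ is a semigroup generator on $\D$ if and only if for all $w\in\D$
\[
\Re\big(g(w)\,\overline{w}\big) \le \Re\big(g(0)\big)\,(1-|w|^2) + \dots
\]
— more precisely the sharp statement is that $f\,\partial_z$ is complete on $\D$ iff $\Re\!\big(\overline{w}\,g(w)\big)\le (1-|w|^2)\,\Re\!\big(\overline{w_0}\,g(w_0)\big)\big/(1-|w_0|^2)$ fails to be the right shape; the correct and standard one (Aharonov–Elin–Reich–Shoikhet) is the two-point inequality
\[
\Re\!\left(g(w)\,\frac{1-\overline{w}}{\,\overline{?}\,}\right)\le\cdots .
\]
Rather than guess, I would start from the well-known fact that $g$ is a generator on $\D$ iff $g(w) = a - \overline{a}w^2 + w\,h(w)$ with $\Re h \ge 0$ and $h(0)\in\R$ — equivalently iff
\[
\Re\!\big[(1-w)\overline{(1-w)}\,\cdot\,\text{stuff}\big]\ge 0,
\]
and then simply push this through the substitution $w = C^{-1}(z) = (z-1)/(z+1)$. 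Under this substitution $1-w = 2/(z+1)$, $\overline{w} = (\overline z-1)/(\overline z+1)$, $1-|w|^2 = 4\Re z/|z+1|^2$, and $g(w)$ becomes $f(z)(1-w)^2/2 = 2f(z)/(z+1)^2$. Plugging these into the disk criterion and clearing the (positive) common factors should produce exactly \eqref{flow-inv}, with the term $\Re z\cdot\Re\!\big(f(1)\tfrac{\overline z-1}{\overline z+1}\big)$ arising from the value of the generator at the base point $z=1$ (corresponding to $w=0$), since $g(0) = f(1)/2$.

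The key steps in order: (1) state the Cayley correspondence and verify that $f\,\partial_z$ on $\Pi$ is a complete vector field iff $g\,\partial_w$ on $\D$ is, where $g(w)=\tfrac12 f(C(w))(1-w)^2$; (2) quote the precise $\D$-side generator inequality from \cite{E-S-book} (the Aharonov–Elin–Reich–Shoikhet / Berkson–Porta range condition) in its "two-point" form referenced to the origin; (3) substitute $w=(z-1)/(z+1)$, compute $1-w$, $\overline w$, $1-|w|^2$, $g(w)$, $g(0)$ explicitly; (4) multiply through by $|z+1|^2/4 > 0$ and simplify to land on \eqref{flow-inv}; (5) note the equivalence is iff at each stage, so the chain closes.

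The main obstacle I anticipate is purely bookkeeping: getting the disk-side inequality in exactly the right normalization so that the substitution produces the stated right-hand side without stray factors, and making sure the base point $z=1\leftrightarrow w=0$ and the term $\Re\!\big(f(1)\tfrac{\overline z-1}{\overline z+1}\big)$ match the constant $g(0)$ term (including the factor $\Re z$, which is the image of $(1-|w|^2)$ up to the $|z+1|^2$ factor). A secondary subtlety is checking the "hence for all $\lambda$" / surjectivity side is not needed here — for $\Hol(\Pi)$ semigroups the generator condition is purely the infinitesimal (flow-invariance) inequality, with no extra range hypothesis, because one works with the nonlinear flow directly rather than via Lummer–Phillips; I would make that explicit so the reader does not expect an analogue of the surjectivity clause.
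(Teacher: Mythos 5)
Your proposal is correct and follows essentially the same route as the paper: conjugate by the Cayley transform to get $g(w)=\tfrac12(1-w)^2f(C(w))$ on $\D$, invoke the flow-invariance inequality $\Re\left(g(w)\overline{w}\right)\le(1-|w|^2)\Re\left(g(0)\overline{w}\right)$ from \cite{SD} and \cite{E-S-book}, and substitute $w=(z-1)/(z+1)$; your bookkeeping ($1-|w|^2=4\Re z/|z+1|^2$, $g(0)=f(1)/2$, clearing the factor $2/|z+1|^2$) does land exactly on \eqref{flow-inv}. The only advice is to state the disk-side criterion cleanly from the outset rather than circling around several variant forms, since that single inequality is the whole content of the proof.
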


\begin{proof}
To prove the necessity of \eqref{flow-inv}, suppose that $f$
generates a semigroup $\{F_t \}_{t\ge0}\subset\Hol(\Pi)$. Let
$C\in\Hol(\D,\Pi)$ be the Cayley transform of $\D$ onto $\Pi$
defined by $w\mapsto\dfrac{1+w}{1-w}\,$. It is easy to see that
the family $\{\Phi_t\}_{t\ge0}$ defined by $\Phi_t=C^{-1}\circ
F_t\circ C$ forms a semigroup  on $\D$. We find its generator $g$
by differentiation:
\[
g(w)=\lim_{t\rightarrow 0^{+}}\frac{\Phi_{t}(w)-w}{t}=
\left(C^{-1}\right)'(C(w))f(C(w))=\frac{(1-w)^2f(C(w))}2\,.
\]
Since $g$ is a semigroup generator in $\D$, it satisfies the
flow-invariance condition $\Re \left(g(w)\overline{w}\right)\le
(1-|w|^2) \Re\left(g(0)\overline{w}\right)$ (see \cite[Proposition
3.5.4]{SD} and \cite[Theorem 2.3]{E-S-book}\footnote{In
\cite[Theorem 2.3]{E-S-book}, the opposite inequality sign
appears.  This is due to a slightly different definition.}). Thus,
\[
\Re \left((1-w)^2 f(C(w))\overline{w}\right)\le (1-|w|^2)
\Re\left(f(1)\overline{w}\right).
\]
Substituting $w=C^{-1}(z)=\dfrac{z-1}{z+1}$, we obtain the
required inequality.

Conversely, suppose that $f$ satisfies \eqref{flow-inv}.

Then the function $g\in\Hol(\D,\C)$ defined by
$g(w)=\frac{1}{2}(1-w)^2f(C(w))$ satisfies the flow-invariance
condition, and hence generates a semigroup $\{\Phi_t\}_{t\ge0}$ on
$\D$. Then the functions $F_t=C \circ \Phi_t\circ C^{-1}$ form a
semigroup on $\Pi$. Differentiating $\{F_t\}_{t\geq 0}$ with
respect to $t$ at $0$, one sees that $\{F_t\}_{t\geq 0}$ is
generated by~$f$.
\end{proof}

\begin{propo}\label{prop-DW}
Let $f\in \Hol(\Pi,\C)$ be a semigroup generator in $\Pi$. If
$f'(\infty):=\angle\lim\limits_{z\to\infty}\dfrac{f(z)}{z+1}$
exists and $f'(\infty) \ge 0$, then $\Re f(z)\ge0$ for all
$z\in\Pi$.
\end{propo}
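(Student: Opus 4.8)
The plan is to use the flow-invariance inequality \eqref{flow-inv} from Proposition~\ref{propo-flow-inv} and let $z$ approach $\infty$ nontangentially along a suitable path. First I would rewrite \eqref{flow-inv} after dividing both sides by $|z+1|^2$ and $|z|$ (or by $\Re z$), trying to isolate a term that converges to $\Re f(z)/\Re z$ or to $\Re(f(z)/(z+1))$. The point is that the quantity $\dfrac{\overline z-1}{z+1}$ tends to $1$ as $z\to\infty$, so the left-hand side of \eqref{flow-inv} behaves like $\Re f(z)$, while the right-hand side behaves like $\Re z\cdot\Re f(1)\cdot\dfrac{\overline z-1}{\overline z+1}$, which again tends (after the multiplication by $\Re z$) to something controlled by the angular limit hypothesis. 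So the inequality should, in the limit, give $\Re f(z)\ge 0$ — but this is too naive, since $z$ appears on both sides and we cannot simultaneously send it to infinity and keep it fixed.

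The correct mechanism: fix an arbitrary $w\in\Pi$ at which we want $\Re f(w)\ge 0$, and instead exploit that $f$ generates a semigroup, so $F_t(w)\to\infty$ (the Denjoy–Wolff point) and $\Re F_t(w)$ is non-decreasing in $t$ by Julia's lemma (as recalled in the introduction). Differentiating $\Re F_t(w)$ in $t$ gives $\Re f(F_t(w))=\frac{d}{dt}\Re F_t(w)$. Actually a cleaner route is this: since $f$ is a generator, the semigroup trajectory $z_t:=F_t(w)$ satisfies $\dot z_t=f(z_t)$ and $z_t\to\infty$ nontangentially — here I would invoke that the convergence to a boundary Denjoy–Wolff point at $\infty$ is nontangential, a standard fact for semigroups (e.g. from \cite{SD}). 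Then I substitute $z=z_t$ into \eqref{flow-inv} and let $t\to\infty$: the left side tends to $\lim_t\Re f(z_t)$, and I must show the right side tends to a nonnegative (or zero) limit, using $f'(\infty)\ge 0$ to control $\Re\left(f(1)\dfrac{\overline{z_t}-1}{\overline{z_t}+1}\right)$ and the growth of $\Re z_t$.

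An alternative and probably more robust approach avoids the dynamics. Consider the auxiliary function $p(z):=\dfrac{f(z)}{z+1}$, which is holomorphic on $\Pi$. The hypothesis says $\angle\lim_{z\to\infty}p(z)=f'(\infty)\ge 0$ exists. Rewriting \eqref{flow-inv} in terms of $p$: dividing through by $|z+1|$ and using $\dfrac{\overline z-1}{z+1}=\overline{\left(\dfrac{z-1}{z+1}\right)}\cdot\dfrac{\overline{z}+1}{z+1}$, I would try to express the left side of \eqref{flow-inv} as $|z+1|\cdot\Re\!\bigl(p(z)\,\overline{w}\bigr)$ where $w=\dfrac{z-1}{z+1}\in\D$, matching the disk flow-invariance form already used in the proof of Proposition~\ref{propo-flow-inv}. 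Concretely, if $g(w)=\tfrac12(1-w)^2 f(C(w))$ is the disk generator, then $g(0)=\tfrac12 f(1)$ and the disk flow-invariance condition reads $\Re(g(w)\overline w)\le(1-|w|^2)\Re(g(0)\overline w)$. The quantity $f'(\infty)$ translates into a condition on $g$ near the boundary point $w=1$: one computes $\angle\lim_{w\to1}\dfrac{g(w)}{1-w}$ in terms of $f'(\infty)$, and the sign condition $f'(\infty)\ge 0$ becomes a sign condition there. The claim $\Re f\ge 0$ on $\Pi$ is equivalent to $\Re\bigl((1-w)^{-2}g(w)\bigr)\ge 0$, i.e. to a Herglotz-type positivity for $g$ twisted by $(1-w)^2$. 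I would then apply the standard Berkson–Porta representation $g(w)=(1-w)(1-\overline\tau w)(\tau-w)^{?}\cdots$ — more precisely, since the Denjoy–Wolff point of the disk semigroup is $\tau=1$, the Berkson–Porta formula gives $g(w)=-(1-w)^2 q(w)$ with $\Re q\ge 0$ on $\D$, i.e. $f(C(w))=-2q(w)$, hence $\Re f(C(w))=-2\Re q(w)\le 0$ — which would contradict what we want, signalling that I must be careful with sign conventions (the footnote in the excerpt warns of exactly this).

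The main obstacle, and the step I expect to be delicate, is precisely pinning down the sign and extracting the quantitative content of the hypothesis $f'(\infty)\ge 0$ through the Cayley transform and the Berkson–Porta representation of $g$. Once the Denjoy–Wolff point of the induced disk semigroup is identified as $w=1$ (which follows since the Denjoy–Wolff point of $\mathbf F$ is $\infty=C(1)$), the Berkson–Porta decomposition $g(w)=(1-w)(\,\overline{1}-1\cdot w\,)P(w)$ with $\Re P\ge 0$ — i.e. $g(w)=(1-w)^2 P(w)$ after absorbing constants and using $\tau=1$ — immediately yields $f(C(w))=2P(w)$, whence $\Re f\ge 0$ on $\Pi$, \emph{unconditionally}. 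So the real question is why the proposition needs the hypothesis $f'(\infty)\ge 0$ at all: the answer must be that without it one only gets $g(w)=(1-w)(\tau-w)(1-\overline\tau w)P(w)/|\tau|^2\cdots$ with a possibly different $\tau$, and the condition $f'(\infty)\ge 0$ is exactly what forces $\tau=1$, i.e. forces the Denjoy–Wolff point of $\mathbf F$ to be $\infty$ rather than a finite boundary point. Thus the real work is: (1) show $f'(\infty)\ge 0$ implies the Denjoy–Wolff point is at $\infty$ (or that $f$ has no interior null point with the wrong configuration), using the angular-derivative characterization of the Denjoy–Wolff point together with \eqref{flow-inv}; and (2) then read off $\Re f\ge 0$ from Berkson–Porta. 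I would present (1) by evaluating the flow-invariance inequality along the positive real axis $z=x\to+\infty$, where \eqref{flow-inv} simplifies (since $\overline z=z$ there) to $\Re f(x)\cdot\dfrac{x-1}{x+1}\le x\cdot\dfrac{x-1}{x+1}\cdot\Re\!\bigl(f(1)\dfrac{x-1}{x+1}\bigr)$, i.e. $\Re f(x)\le x\cdot\dfrac{x-1}{x+1}\Re f(1)$ for $x>1$; combined with $f(x)/(x+1)\to f'(\infty)\ge0$ this should be arranged to pin down the location of the boundary fixed point, completing the argument.
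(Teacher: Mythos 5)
Your final distilled plan is essentially the paper's proof, and the correct mechanism for the crucial first step is the one you mention only in passing, not the one you propose to ``present.'' The paper passes to the disk exactly as you suggest: with $g(w)=\frac12(1-w)^2f(C(w))$ one computes $\angle\lim_{w\to1}\frac{g(w)}{w-1}=\angle\lim_{z\to\infty}\frac{-f(z)}{z+1}=-f'(\infty)\le0$, and then cites the angular-derivative characterization of the Denjoy--Wolff point (\cite[Theorem 2.10]{E-S-book}) to conclude that $w=1$, i.e.\ $z=\infty$, is the Denjoy--Wolff point. Your proposed alternative presentation of this step --- evaluating \eqref{flow-inv} along the positive real axis --- does not work: for $z=x>1$ it reduces to $\Re f(x)\le x\,\Re f(1)$, an upper bound carrying no information about the location of the Denjoy--Wolff point, and combining it with $f(x)/(x+1)\to f'(\infty)$ will not localize $\tau$ either. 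For the second step the paper does not invoke Berkson--Porta: once the Denjoy--Wolff point is at $\infty$, Julia's lemma gives that $t\mapsto\Re F_t(z)$ is non-decreasing, and differentiating at $t=0$ yields $\Re f(z)\ge0$. Your Berkson--Porta route is an acceptable substitute: with the convention $\partial_t u=f(u)$ used here, the representation at $\tau=1$ reads $g(w)=(1-w)^2p(w)$ with $\Re p\ge0$, so $f(C(w))=2p(w)$ and the sign comes out right; your worry about the minus sign stems from the opposite convention used in \cite{E-S-book} (see the footnote in the proof of Proposition~\ref{propo-flow-inv}). You also correctly diagnosed the role of the hypothesis $f'(\infty)\ge0$: it is exactly what pins the Denjoy--Wolff point at $\infty$.
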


\begin{proof}
Denote by $\mathbf{F}=\{F_t \}_{t\ge0} \subset \Hol(\Pi)$ the
semigroup generated by $f$ and by $\mathbf{\Phi}=\{\Phi_t\}_
{t\ge0}\subset\Hol(\D)$ the semigroup defined by $\Phi_t= C^{-1}
\circ F_t\circ C$. The generator $g\in\Hol(\D,\C)$ of
$\mathbf{\Phi}$ is given by $g(w)=\frac{1}{2}(1-w)^2f(C(w)).$ If
$\angle\lim\limits_ {z\to\infty}\frac{f(z)}{z+1}=a\ge0$, then
\[
\angle\lim_{w\to 1}\dfrac{g(w)}{w-1}= \angle\lim_{w\to
1}\dfrac{(w-1)f(C(w))}{2}=\angle\lim_{z\to\infty}
\frac{-f(z)}{z+1}=-a\le0.
\]
Hence, by \cite[Theorem 2.10]{E-S-book}, $w=1$ is the
Denjoy--Wolff point of $\mathbf{\Phi}$. In this case, $\mathbf{F}$
has the Denjoy--Wolff point at $\infty$. By Julia's lemma, $\Re
F_t(z)$ is a non-decreasing function in $t$. Now, by the
definition of the semigroup generator, we conclude that $\Re
f(z)\ge0$ for all $z\in\Pi$.
\end{proof}

\section{Semigroups of holomorphic mappings}\label{sect_hp}
\setcounter{equation}{0}

In this section, we find criteria for a semigroup of holomorphic
self-mappings of the right half-plane $\Pi$ to admit an analytic
extension in its parameter to a given sector. It follows from
these criteria that semigroups having repelling fixed points, as
well semigroups of hyperbolic type, cannot be extended to any
sector. Moreover, the same control functions that govern the size
of the sector of analyticity enable us to localize the semigroup
trajectories.

\begin{theorem}\label{sector-basic}
Let $f \in \Hol(\Pi,\C)$ be a generator of the one-parameter
continuous semigroup ${\mathbf{F}=\{F_{t}\}_{t\geq
0}\subset\Hol(\Pi)}$
 with the Denjoy--Wolff point at $\infty$,
and let $h$ be its associated K{\oe}nigs function. For all
$\theta_1,\theta_2>0$ such that $\theta_1+\theta_2\leq \pi$, the
following are equivalent:
\begin{itemize}
\item[{\rm(i)}] $\mathbf{F}$ can be analytically extended to a
semigroup $\{F_{\zeta}\}_{\zeta \in \Omega(\theta_1,\theta_2)}$
such that for each  $z \in \Pi$, the function $F_{\zeta}(z)$ tends
to $\infty$ along every ray in $\Omega(\theta_1,\theta_2);$

\item[{\rm(ii)}] the function $f$ satisfies
\begin{equation}\label{gen-in-sector}
-\frac{\pi}{2}+  \theta_1\leq\arg f(z) \leq \frac{\pi}{2}-
\theta_2,  \quad z \in \Pi;
\end{equation}

\item[{\rm(iii)}] $h \in \Sigma(\Pi,\theta_1,\theta_2).$
\end{itemize}
\end{theorem}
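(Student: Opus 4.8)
The plan is to establish the cycle $(\mathrm{iii})\Rightarrow(\mathrm{i})\Rightarrow(\mathrm{ii})\Rightarrow(\mathrm{iii})$, using the identity $h'(z)f(z)=1$ of \eqref{h-1} as the dictionary between $f$ and $h$, and letting Lemma~\ref{lem-conv-cond} and Proposition~\ref{prop-conv-in-sector} do the geometric work. Two preliminary remarks are used throughout. First, since the Denjoy--Wolff point of $\mathbf F$ is $\infty$, the semigroup has no interior fixed point, so $h$ is univalent, $f$ is zero-free on $\Pi$ (a zero of $f$ would be a common fixed point of all $F_t$), and $\Re f(z)\ge 0$; consequently $h'(z)=\overline{f(z)}/|f(z)|^{2}$, so $\Re h'(z)=\Re f(z)/|f(z)|^{2}\ge 0$, and by Lemma~\ref{lem-conv-cond} the domain $h(\Pi)$ is convex in the positive direction of the real axis. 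Since $F_t(z)=h^{-1}(h(z)+t)\to\infty$ as $t\to\infty$, this gives $h\in\Sigma(\Pi)$. Second, as $f$ is zero-free on the simply connected domain $\Pi$, the functions $\arg f$ and $\arg h'=-\arg f$ admit single-valued continuous branches, which (by $\Re f\ge 0$) we normalize to take values in $[-\tfrac\pi2,\tfrac\pi2]$.

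\emph{$(\mathrm{iii})\Rightarrow(\mathrm{i})$.} Assuming $h\in\Sigma(\Pi,\theta_1,\theta_2)$, relation \eqref{in-cone} gives $h(z)+\overline{\Omega(\theta_1,\theta_2)}\subset h(\Pi)$ for every $z\in\Pi$, so $F_\zeta(z):=h^{-1}(h(z)+\zeta)$ is a well-defined element of $\Hol(\Pi)$ for each $\zeta\in\Omega(\theta_1,\theta_2)$, and $\zeta\mapsto F_\zeta(z)$ is holomorphic since $h^{-1}$ is. By Abel's equation \eqref{abel1} this $F_t$ agrees with the original semigroup for real $t>0$; the semigroup law $F_{\zeta+\eta}=F_\zeta\circ F_\eta$ and the normalization $F_\zeta(z)\to z$ as $\zeta\to0$ are immediate from the formula. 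Finally, for a ray $\zeta=te^{i\theta}$ with $\theta\in(-\theta_1,\theta_2)$, the required convergence $F_{te^{i\theta}}(z)\to\infty$ as $t\to\infty$ is exactly the limit condition of Definition~\ref{def-convex-one-dir} for the membership $h\in\Sigma(\Pi,\theta)$, which holds because $\Sigma(\Pi,\theta_1,\theta_2)\subset\Sigma(\Pi,\theta)$ by \eqref{conv-in-cone}.

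\emph{$(\mathrm{i})\Rightarrow(\mathrm{ii})$.} Fix $\theta\in(-\theta_1,\theta_2)$ and restrict the extension to the ray through $e^{i\theta}$: by the semigroup law $\{F_{te^{i\theta}}\}_{t\ge0}$ is a one-parameter continuous semigroup of $\Hol(\Pi)$, and differentiating it at $t=0$ — using that $\zeta\mapsto F_\zeta(z)$ is holomorphic and restricts on $\R^{+}$ to $\mathbf F$, whose $t$-derivative at $0$ is $f(z)$ — the chain rule identifies its generator as $e^{i\theta}f$. Since $F_{te^{i\theta}}(z)\to\infty$ for all $z$, this semigroup has Denjoy--Wolff point $\infty$, hence $\Re(e^{i\theta}f(z))\ge 0$ on $\Pi$, i.e. $\arg f(z)+\theta\in[-\tfrac\pi2,\tfrac\pi2]$. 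Letting $\theta$ run over $(-\theta_1,\theta_2)$ and intersecting yields $-\tfrac\pi2+\theta_1\le\arg f(z)\le\tfrac\pi2-\theta_2$, which is \eqref{gen-in-sector}.

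\emph{$(\mathrm{ii})\Rightarrow(\mathrm{iii})$.} By \eqref{h-1}, condition \eqref{gen-in-sector} is equivalent to $\theta_2-\tfrac\pi2\le\arg h'(z)\le\tfrac\pi2-\theta_1$ on $\Pi$. Fix $\theta\in(-\theta_1,\theta_2)$; then $\arg(e^{-i\theta}h'(z))=\arg h'(z)-\theta\in(-\tfrac\pi2,\tfrac\pi2)$, so $\Re(e^{-i\theta}h'(z))\ge 0$, and Lemma~\ref{lem-conv-cond} applied to the univalent function $e^{-i\theta}h$ shows $e^{-i\theta}h(\Pi)$ is convex in the positive direction of the real axis, i.e. $h(z)+te^{i\theta}\in h(\Pi)$ for all $t\ge 0$ and $z\in\Pi$. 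As $\theta\in(-\theta_1,\theta_2)$ was arbitrary, $h(z)+\Omega(\theta_1,\theta_2)\subset h(\Pi)$ for every $z$, and since $h\in\Sigma(\Pi)$ by the preliminary remark, Proposition~\ref{prop-conv-in-sector} gives $h\in\Sigma(\Pi,\theta_1,\theta_2)$. I expect the genuinely delicate point to be this interplay between the two ways of escaping to infinity — the ray-limit appearing in (i) versus the limit condition built into Definition~\ref{def-convex-one-dir} — which is precisely why Proposition~\ref{prop-conv-in-sector}, and not merely Lemma~\ref{lem-conv-cond}, is needed in $(\mathrm{ii})\Rightarrow(\mathrm{iii})$; a secondary nuisance is the bookkeeping of the non-degeneracy facts (no interior fixed point, $f$ zero-free, single-valued $\arg f$) recorded at the start.
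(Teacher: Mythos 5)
Your proposal is correct and follows essentially the same route as the paper: the identical cycle of implications (i)$\Rightarrow$(ii)$\Rightarrow$(iii)$\Rightarrow$(i), with the generator of the ray-restricted semigroup identified as $e^{i\theta}f$, the dictionary $h'f=1$ translating \eqref{gen-in-sector} into $\Re\bigl(e^{-i\theta}h'\bigr)\ge 0$, and Lemma~\ref{lem-conv-cond} plus Proposition~\ref{prop-conv-in-sector} supplying the geometry. Your explicit verification that $h\in\Sigma(\Pi)$ (needed to invoke Proposition~\ref{prop-conv-in-sector}) is a small point the paper leaves implicit, but otherwise the arguments coincide.
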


\begin{proof} {\it Step 1.} Suppose that assertion (i) holds.
Fix $z \in \Pi$. Since $\lim\limits_{\R\ni\zeta \to
0}\dfrac{F_\zeta(z)-z}{\zeta}=f(z)$ and the semigroup $\mathbf{F}$
is real-analytic in $t$ in a neighborhood of $t=0$, then by the
uniqueness theorem, $\dfrac{F_\zeta(z)-z}{\zeta} \to f(z)$ as
$\zeta \to 0$ along any path in $\C$. For $\theta\in
(-\theta_1,\theta_2)$, define semigroup $\{\Psi_t\}_{t\geq 0}$ by
$\Psi_t (z)= F_{te^{i\theta}}(z)$. Its generator $\psi$ is given
by
\[
\psi(z)=\left. \frac{\partial\Psi_t\left(z\right)}{\partial
t}\right|_{t= 0}=\left. \frac{\partial
F_{te^{i\theta}}\left(z\right)}{\partial t}\right|_{t=
0}=\lim\limits_{t \to
0^+}{e^{i\theta}}\dfrac{F_{te^{i\theta}}-z}{{te^{i\theta}}}={e^{i\theta}}f(z).
\]
Since $\lim\limits_{t\rightarrow\infty}F_{e^{i\theta}t}(z)=
\infty$, then $0\leq \Re\psi(z)=\Re{e^{i\theta}}f(z).$ Hence, by
the continuity of $e^{i\theta} f(z)$ in $\theta$, we have $
-\dfrac{\pi}{2}\leq\arg{e^{i\theta}}f(z)\leq\dfrac{\pi}{2}$ for
all  $\theta\in [-\theta_1,\theta_2]$, and (ii) follows.

{\it Step 2.} Assume now that (ii) holds. By formula \eqref{h-1},
$\arg h'(z)=-\arg f(z)$. Then, by \eqref{gen-in-sector}, we have
$-\dfrac{\pi}{2}\leq \arg h ^\prime(z) - \theta\leq \dfrac{\pi}{2}
\,$, from which it follows that ${\Re e^{-i\theta}h^\prime(z)\geq
0}$ for all $\theta\in [-\theta_1,\theta_2].$

By Lemma \ref{lem-conv-cond}, function $e^{-i\theta}h$ is convex
in the positive direction of the real axis, that is,
$e^{-i\theta}h(z)+t \in e^{-i\theta}h(\Pi)$ for all $t\geq 0$.
This implies that $h(z)+\zeta \in h(\Pi)$ for all $\zeta \in
\Omega(\theta_1,\theta_2).$ Assertion (iii) now follows, by
Proposition \ref{prop-conv-in-sector}.

{\it Step 3.} Assertion (iii) implies that
$F_\zeta(z):=h^{-1}(h(z)+\zeta)$ is well defined, analytic in
$\zeta \in \Omega(\theta_1,\theta_2)$, and assumes values in $\Pi$
for all $\zeta \in \Omega(\theta_1,\theta_2).$ For each pair
$\zeta_1,\zeta_2 \in \Omega(\theta_1,\theta_2)$, we have
$\zeta_1+\zeta_2 \in \Omega(\theta_1,\theta_2)$ and
\begin{eqnarray*}
&& F_{\zeta_1}\circ
F_{\zeta_2}(z)=F_{\zeta_1}\left(\sigma^{-1}(\sigma(z)+\zeta_2)
\right)= \\
&& \sigma^{-1}[\sigma\left(\sigma^{-1}(\sigma(z)+\zeta_2)
\right)+\zeta_1]=\sigma^{-1}(\sigma(z)+\zeta_2
+\zeta_1)=F_{\zeta_1+\zeta_2}(z).
\end{eqnarray*}
Finally, from Proposition \ref{prop-conv-in-sector}, we conclude
that $${\lim\limits_{t \to \infty}F_{te^{i\theta}}=\lim\limits_{t
\to \infty} h^{-1}\left( h(z)+ te^{i\theta} \right) =\infty}$$ for
all $z\in\Pi$ and $\theta\in [-\theta_1,\theta_2]$, which
establishes (i).
\end{proof}

\begin{examp}\label{exam1}
Consider the affine semigroup $\mathbf{F}=
\left\{F_t\right\}_{t\geq 0}\subset\Hol(D)$, where $F_t(z)=z+At$
for some $A\in \Pi$. This semigroup is generated by the constant
function $f(z)=A$. According to the equivalence of (ii) and (iii)
of Theorem~\ref{sector-basic}, $\mathbf{F}$ has an analytic
extension to the sector $\Omega\left(\frac{\pi}{2}+\arg
A,\frac{\pi}{2}-\arg A\right)$. Indeed, let $F_\zeta(z)=z+A\zeta$.
Then $\Re F_\zeta (z)>0$ for all $z\in\Pi$ whenever $\zeta \in
\Omega\left(\frac{\pi}{2}+\arg A,\frac{\pi}{2}-\arg A\right)$. The
same conclusion also follows from the equivalence of (iii) and
(i).
\end{examp}

The geometrical constructions used in Theorem~\ref{sector-basic}
imply that if a semigroup $\mathbf{F}$ has an analytical extension
to a sector, then $\mathbf{F}$ must be of parabolic type. Indeed,
in the hyperbolic case, the planar domain $h(\Pi)$ is completely
contained in a horizontal strip of finite width (see \cite{C-DM}),
so condition (iii) of Theorem \ref{sector-basic} is not satisfied.

By contrast, as in Example~\ref{exam1}, there are parabolic type
semigroups which admit an analytic extension to the half-plane
$\{\zeta: |\arg\zeta|<\frac\pi2\}$. On the other hand, there exist
parabolic type semigroups which cannot be extended. For example,
each semigroup of automorphic type satisfies
${d_{h(\Pi)}(w)=\lim\limits_{s\to \infty}\delta_ {h(\Pi)}(w +
s)<\infty}.$ Hence its planar domain $h(\Pi)$ is contained in a
horizontal half-plane. So no sector $\Omega\supset \R^+$ is
contained in $h(\Pi)$, i.e., condition (iii) of Theorem
\ref{sector-basic} is not satisfied.

Moreover, the planar domain of a semigroup that has a regular
repelling point contains a strip of the final maximal width; see
\cite{C-DM} and \cite{E-S-book}. Thus, condition (iii) of
Theorem~\ref{sector-basic} cannot be satisfied, and the semigroup
has no analytical extension to a sector.

\begin{corol}\label{corol-1}
Let ${\mathbf{F}=\{F_{t}\}_{t\geq 0}\subset\Hol(\Pi)}$ be a
one-parameter continuous semigroup with the Denjoy--Wolff point at
$\infty$. Suppose that $\mathbf{F}$ can be extended to a semigroup
$\{F_{\zeta}\}_{\zeta\in\Omega}$ which is analytic in some sector
$\Omega$ and such that $F_{\zeta}(z)\to\infty$ along every ray in
$\Omega$ for each $z\in\Pi$. Then $\mathbf{F}$ is of parabolic
type and has no boundary regular repelling points.
\end{corol}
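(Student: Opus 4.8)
The plan is to derive Corollary~\ref{corol-1} directly from Theorem~\ref{sector-basic} together with the geometric observations made in the preceding paragraphs. Since $\mathbf{F}$ is assumed to extend analytically to a semigroup $\{F_\zeta\}_{\zeta\in\Omega}$ with $F_\zeta(z)\to\infty$ along every ray in $\Omega$, we may shrink $\Omega$ if necessary and assume it has the form $\Omega(\theta_1,\theta_2)$ for some $\theta_1,\theta_2>0$ with $\theta_1+\theta_2\le\pi$: indeed any open sector with vertex at $0$ contains such a subsector, and all the ray-convergence hypotheses are inherited by the subsector. Applying the implication (i)$\Rightarrow$(iii) of Theorem~\ref{sector-basic}, we obtain $h\in\Sigma(\Pi,\theta_1,\theta_2)$, where $h$ is the associated K{\oe}nigs function of $\mathbf{F}$; in particular, by \eqref{in-cone}, $h(z)+\overline{\Omega(\theta_1,\theta_2)}\subset h(\Pi)$ for all $z\in\Pi$, so the planar domain $h(\Pi)$ contains a translate of a genuine (nondegenerate) sector.

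Next I would rule out the hyperbolic case. As recalled right after Theorem~\ref{sector-basic} (citing \cite{C-DM}), if $\mathbf{F}$ is of hyperbolic type then $h(\Pi)$ is contained in a horizontal strip of finite width. A strip of finite width cannot contain a translate of a sector $\overline{\Omega(\theta_1,\theta_2)}$ with $\theta_1,\theta_2>0$, because such a sector is unbounded in the imaginary direction. This contradiction forces $\mathbf{F}$ to be of parabolic type. (Recall that, having Denjoy--Wolff point at $\infty$ and containing neither the identity nor an elliptic automorphism, the semigroup is either of hyperbolic or of parabolic type, so these are the only alternatives.)

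Finally I would exclude boundary regular repelling fixed points. Again as noted after Theorem~\ref{sector-basic} (citing \cite{C-DM} and \cite{E-S-book}), if $\mathbf{F}$ possesses a boundary regular repelling point, then its planar domain $h(\Pi)$ contains a horizontal strip of maximal (finite) width and moreover is itself contained in such a strip — more precisely, the presence of a repelling point bounds $h(\Pi)$ between two horizontal lines. As in the hyperbolic case, this is incompatible with $h(\Pi)$ containing a translate of $\overline{\Omega(\theta_1,\theta_2)}$. Hence $\mathbf{F}$ has no boundary regular repelling points, which completes the proof.

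The only genuine point requiring care — and the step I would flag as the main obstacle — is the reduction to a sector of the canonical form $\Omega(\theta_1,\theta_2)$ with both half-angles strictly positive: one must check that the hypotheses (analyticity of the extension, and $F_\zeta(z)\to\infty$ along every ray) really do pass to a closed subsector so that Theorem~\ref{sector-basic}(i) applies verbatim. Everything else is a direct appeal to Theorem~\ref{sector-basic} and to the structural facts about planar domains of hyperbolic semigroups and of semigroups with repelling points already quoted in the text.
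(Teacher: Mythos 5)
Your overall strategy --- running the hypothesis through the implication (i)$\Rightarrow$(iii) of Theorem~\ref{sector-basic} and then reading off contradictions from the geometry of the planar domain $h(\Pi)$ --- is legitimate, and the hyperbolic case is handled correctly (the reduction to a canonical sector that you flag as the main obstacle is in fact harmless, since a sector to which $\{F_t\}_{t\ge0}$ extends contains $\R^+$ and hence some $\Omega(\theta_1,\theta_2)$ with $\theta_1,\theta_2>0$). The genuine gap is in the last step. You assert that a boundary regular repelling point forces $h(\Pi)$ to be \emph{contained} in a horizontal strip (``bounds $h(\Pi)$ between two horizontal lines''). That is not what \cite{C-DM} says, and it is false: a repelling fixed point corresponds to a maximal horizontal strip of finite width \emph{contained in} $h(\Pi)$, while $h(\Pi)$ itself may be unbounded in the imaginary direction. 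For example, the domain $\C\setminus\bigl(\{\Im w=1,\ \Re w\le 0\}\cup\{\Im w=-1,\ \Re w\le 0\}\bigr)$ is convex in the positive direction, contains the maximal strip $\{-1<\Im w<1\}$ (so the corresponding semigroup has a repelling point), contains translates of nondegenerate sectors, and lies in no horizontal strip. So your ``as in the hyperbolic case'' conclusion does not follow from what you have established.

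The step can be repaired without leaving your route: if $S$ is a full (two-sided infinite) horizontal strip contained in $h(\Pi)$ and $h\in\Sigma(\Pi,\theta_1,\theta_2)$ with $\theta_1,\theta_2>0$, then by \eqref{in-cone} one has $w+\Omega(\theta_1,\theta_2)\subset h(\Pi)$ for \emph{every} $w\in S$, and $S+\Omega(\theta_1,\theta_2)=\C$, contradicting the univalence of $h$. The point is that condition (iii) attaches a sector to every point of the planar domain, not just to one. For comparison, the paper's own proof avoids the strip machinery entirely and argues through condition (ii) of Theorem~\ref{sector-basic}: near a repelling point $ib$ one writes $f(z)=(z-ib)\bigl(f'(ib)+\rho(z)\bigr)$ with $f'(ib)>0$, so $\arg f(ib+re^{i\theta})$ is close to $\theta$ for small $r$ and every $\theta\in(-\pi/2,\pi/2)$, violating \eqref{gen-in-sector}; the hyperbolic case is excluded there by noting that $e^{i\theta}f$ cannot be a generator for $\theta\ne0$ when $\angle\lim_{z\to\infty}f(z)/z>0$. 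That local argument is shorter and does not depend on the structure theory of the Koenigs domain.
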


\begin{proof}
Note that $\mathbf{F}$ has no interior fixed point since it has
the  Denjoy--Wolff point at $\infty$. Denote the generator of
$\mathbf{F}$ by $f \in \Hol(\Pi)$. In the hyperbolic case, $\Re
f(z)\ge 0$ and ${\angle\lim\limits_{z \to \infty}
\frac{f(z)}{z}=a>0}$. Hence, for every $\theta\ne0$, the function
$e^{i\theta}f(z)$ is not a semigroup generator, and condition (ii)
fails. Therefore, $\mathbf{F}$ must be of parabolic type.

Suppose by contrary, that $\mathbf{F}$ has a boundary repelling
fixed point $ib\in
\partial\Pi,\ b\in\R$. Then $f(z)=(z-ib)(f^\prime(ib)+\rho(z)),$
where $f^{\prime}(ib)>0$ and $\angle\lim\limits_{z\to ib}
\rho(z)=0$. Setting $z_r=ib+re^{i\theta}$ for a fixed
$\theta\in(-\pi/2,\pi/2)$, we see that for small enough $r$, $\arg
f(z_r)=\theta+\arg(f'(ib)+\rho(z_r))$ is close to $\theta$. By
Theorem~\ref{sector-basic} (ii), this contradicts the analytic
extendibility of $\mathbf{F}$.
\end{proof}

Note that the parabolic group $\{F_t\}_{t\in\R}$, $F_t(z)=z+iqt$
with $q\in\R$ admits an analytic extension to the horizontal
half-plane $\{\zeta: q\Im\zeta\le0 \}$.

Corollary \ref{corol-1} implies the following fact.

\begin{corol}\label{corol-2}
Let  $\Omega$ be a sector which is not contained in a horizontal
half-plane. There is no group
$\mathbf{F}=\{F_t\}_{t\in\R}\subset\Hol(\Pi)$ that can be
analytically extended in its parameter to $\Omega$.
\end{corol}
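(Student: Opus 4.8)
The plan is to argue by contradiction. Suppose a nontrivial one-parameter group $\mathbf{F}=\{F_t\}_{t\in\R}\subset\Hol(\Pi)$ has an analytic extension $\{F_\zeta\}$ in its parameter to $\Omega$; I will show that $\Omega$ must then lie in a horizontal half-plane. First observe that each $F_t$ is an automorphism of $\Pi$: from $F_t\circ F_{-t}=F_{-t}\circ F_t=F_0=\mathrm{id}$ the map $F_t$ is a holomorphic bijection of $\Pi$, hence Möbius. Conjugating $\mathbf{F}$ by a fixed element of $\Aut(\Pi)$ changes neither the analytic extendibility in $\zeta$ nor whether $\Omega$ sits inside a horizontal half-plane, so I may normalize freely. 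If $\mathbf{F}$ has an interior fixed point, I postpone it to the elliptic case below. Otherwise the semigroup $\{F_t\}_{t\geq0}$ has a Denjoy--Wolff point on $\partial\Pi\cup\{\infty\}$; conjugating that point to $\infty$, every $F_t$ becomes an automorphism fixing $\infty$, i.e. $F_t(z)=e^{at}z+i\nu_t$ with $a\geq0$ (if $a<0$, each $F_t$ has a finite, attracting fixed point on $\partial\Pi$, which would be the Denjoy--Wolff point instead of $\infty$), and a further conjugation by a translation lets me assume either $F_t(z)=e^{at}z$ with $a>0$ (hyperbolic case) or $F_t(z)=z+iqt$ with $q\in\R\setminus\{0\}$ (parabolic case).

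In the hyperbolic case, the only holomorphic function of $\zeta$ agreeing with $t\mapsto e^{at}z$ on the reals is $F_\zeta(z)=e^{a\zeta}z$, by the identity theorem; but whenever $\Im\zeta\neq0$ the rotation factor $e^{ia\,\Im\zeta}$ carries part of $\Pi$ outside $\Pi$, so $F_\zeta\notin\Hol(\Pi)$. Since $\Omega$ is not contained in a horizontal half-plane, it contains points with $\Im\zeta\neq0$, a contradiction. (Alternatively, this case is excluded by Corollary~\ref{corol-1}.) In the parabolic case the extension is likewise $F_\zeta(z)=z+iq\zeta$, and the requirement that $\Re F_\zeta(z)>0$ for all $z\in\Pi$ (letting $\Re z\to0^+$) forces $\Re(iq\zeta)\geq0$, i.e. $q\,\Im\zeta\leq0$, for every $\zeta\in\Omega$. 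Hence $\Omega\subseteq\{\zeta:q\,\Im\zeta\leq0\}$, a horizontal half-plane, contradicting the hypothesis.

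It remains to treat the elliptic case, in which some $z_0\in\Pi$ is fixed by every $F_t$; by analytic continuation, $F_\zeta(z_0)=z_0$ for all $\zeta\in\Omega$ as well. Differentiating the semigroup law at $z_0$ shows that $t\mapsto F_t'(z_0)$ is a continuous homomorphism $\R\to\C^*$; since each $F_t$ is an automorphism of $\Pi$ fixing $z_0$, we have $|F_t'(z_0)|=1$, so $F_t'(z_0)=e^{i\alpha t}$ for some $\alpha\in\R$, with $\alpha\neq0$ because a Möbius map fixing $z_0$ with unit derivative there is the identity and $\mathbf{F}$ is nontrivial. Consequently $F_\zeta'(z_0)=e^{i\alpha\zeta}$ for $\zeta\in\Omega$, while the Schwarz lemma applied to the self-map $F_\zeta\colon\Pi\to\Pi$ at its fixed point $z_0$ gives $|F_\zeta'(z_0)|\leq1$. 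Therefore $e^{-\alpha\,\Im\zeta}\leq1$, i.e. $\alpha\,\Im\zeta\geq0$, on all of $\Omega$, so again $\Omega$ lies in a horizontal half-plane --- the final contradiction.

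The normalizations of automorphisms fixing $\infty$ and the uses of the identity theorem (pinning down $F_\zeta$ from the real data) are routine, and I would not dwell on them. The one point genuinely not covered by Theorem~\ref{sector-basic} or Corollary~\ref{corol-1}, both of which presuppose the Denjoy--Wolff point at $\infty$, is the elliptic case; there the substitute for the ``associated planar domain'' argument is the Schwarz-lemma computation of $F_\zeta'(z_0)$ above, and this is the step I expect to require the most care. (If one simply grants the assertion made after Theorem~\ref{sector-basic} that every semigroup admitting an analytic extension to a sector is of parabolic type, then the hyperbolic and elliptic cases drop out and only the short parabolic computation remains.)
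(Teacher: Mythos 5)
Your proof is correct, and its overall skeleton matches the paper's: reduce to a one-parameter group of automorphisms, split according to the location of the fixed point(s), normalize the Denjoy--Wolff point to $\infty$ in the non-elliptic case, and observe that the parabolic group $z\mapsto z+iq\zeta$ is a self-map of $\Pi$ only when $q\,\Im\zeta\le 0$, which confines $\Omega$ to a horizontal half-plane. The genuine difference is in the two cases you treat head-on. For the hyperbolic case the paper routes through Corollary \ref{corol-1} (via condition (ii) of Theorem \ref{sector-basic}), whereas you pin down the extension as $e^{a\zeta}z$ by the identity theorem and note that any nonzero $\Im\zeta$ rotates part of $\Pi$ out of $\Pi$; both work, and yours is self-contained. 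More significantly, the paper disposes of the elliptic case with the bare assertion that the group ``cannot be of elliptic type,'' which is not literally true without using the hypothesis on $\Omega$: the elliptic group conjugate to the rotations of $\D$ does extend analytically to a horizontal half-plane, exactly like the parabolic one. Your Schwarz--Pick computation $|F_\zeta'(z_0)|=e^{-\alpha\Im\zeta}\le 1$, forcing $\alpha\,\Im\zeta\ge 0$ on $\Omega$, is precisely the missing argument, and it is correct (including the observation that $\alpha\ne 0$ for a nontrivial group, and the standard fact that $\zeta\mapsto F_\zeta'(z_0)$ inherits analyticity via the Cauchy integral). So your write-up is not only valid but fills a real gap in the paper's one-line dismissal of the elliptic case; the only caveat, which you flag yourself, is the routine use of the identity theorem to identify $F_\zeta$ from its values on $\Omega\cap\R^+$, which requires the usual reading of ``analytic extension to a sector'' used throughout the paper.
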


\begin{proof}
If such a group exists, it cannot be of elliptic type. Hence it
has the Denjoy--Wolff point $\tau\in\partial\Pi\cup \{\infty\}$.
If $\tau\neq\infty$, then the group $\{G_t\}_{t\in\mathbb{R}}$
defined by
\[
G_t(z)= \frac1{F_t\left(\frac1z +\tau\right)-\tau}
\]
has   the Denjoy--Wolff point $\tilde\tau=\infty$. By
Corollary~\ref{corol-1}, $\{G_t\}$ is of parabolic type, that is,
$G_t(z)=z+iqt$ for some $q\in\R$. Clearly, there is no sector
$\Omega$ such that $G_\zeta(\Pi)\subset\Pi$ for all
$\zeta\in\Omega$.
\end{proof}

We now turn to semigroup generators whose arguments are controlled
by functions given a priori.

\begin{theorem}\label{thm-ector-angle-general}
Let $\gamma_1, \gamma_2:\R^+ \to (0,\frac\pi2)$ be decreasing
functions. Let $\mathbf{F}=\{F_{t}\}_{t\geq 0}\subset\Hol(\Pi)$ be
a semigroup generated by $f \in \Hol (\Pi,\C)$ which satisfies
\begin{equation}\label{control-gen-condition}
-\gamma_1(\Re z)\leq \arg f(z)\leq \gamma_2(\Re z)\quad \mbox{for
all}\quad z\in\Pi.
\end{equation}
Then for each $k>0$, the semigroup
$\mathbf{F}^k=\{\Phi_{t}\}_{t\geq 0}$ defined by
$\Phi_{t}(z)=F_{t}(z+k)-k$ can be analytically extended in $t$ to
the sector
$\Omega\left(\frac\pi2-\gamma_1(k),\frac\pi2-\gamma_2(k)\right)$.
In particular, if $\lim\limits_{s \to \infty} \gamma_1(s)=
\lim\limits_{s \to \infty} \gamma_1(s)=0,$ then for each
$\varepsilon>0$, there exists $k>0$ such that $\mathbf{F}^k$ has
an analytic extension to the sector
$\Omega\left(\frac\pi2-\varepsilon, \frac\pi2-\varepsilon\right).$
\end{theorem}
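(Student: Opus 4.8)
The plan is to reduce Theorem~\ref{thm-ector-angle-general} to Theorem~\ref{sector-basic} applied to the shifted semigroup $\mathbf{F}^k$. First I would observe that the map $z\mapsto z+k$ sends $\Pi$ into the shifted half-plane $\Pi_k=\{z:\Re z>k\}\subset\Pi$, which is invariant under each $F_t$ because $\Re F_t(z)$ is non-decreasing in $t$ (Denjoy--Wolff point at $\infty$, Julia's lemma). Hence $\Phi_t(z)=F_t(z+k)-k$ is indeed a well-defined semigroup of self-mappings of $\Pi$; differentiating in $t$ at $0$ shows its generator is $f^k(z):=f(z+k)$. Since $\mathbf{F}$ has its Denjoy--Wolff point at $\infty$, so does $\mathbf{F}^k$, so Theorem~\ref{sector-basic} applies to $\mathbf{F}^k$ with generator $f^k$.

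Next I would estimate $\arg f^k$. For $z\in\Pi$ we have $\Re(z+k)=\Re z+k>k$, so by the hypothesis \eqref{control-gen-condition} and the monotonicity (decreasing) of $\gamma_1,\gamma_2$,
\[
-\gamma_1(k)\le-\gamma_1(\Re z+k)\le\arg f^k(z)=\arg f(z+k)\le\gamma_2(\Re z+k)\le\gamma_2(k)
\]
for all $z\in\Pi$. Writing $\theta_1=\tfrac\pi2-\gamma_1(k)$ and $\theta_2=\tfrac\pi2-\gamma_2(k)$, this says exactly $-\tfrac\pi2+\theta_1\le\arg f^k(z)\le\tfrac\pi2-\theta_2$, i.e. condition (ii) of Theorem~\ref{sector-basic} holds for $\mathbf{F}^k$. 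Note $\theta_1,\theta_2>0$ since $\gamma_1,\gamma_2$ take values in $(0,\tfrac\pi2)$, and $\theta_1+\theta_2=\pi-\gamma_1(k)-\gamma_2(k)<\pi$, so the hypotheses of Theorem~\ref{sector-basic} are met. By the implication (ii)$\Rightarrow$(i) of that theorem, $\mathbf{F}^k$ extends analytically in its parameter to $\Omega\!\left(\tfrac\pi2-\gamma_1(k),\tfrac\pi2-\gamma_2(k)\right)$, proving the first assertion.

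For the ``in particular'' clause, assume $\gamma_1(s),\gamma_2(s)\to0$ as $s\to\infty$ (I read the displayed ``$\gamma_1$'' twice as a typo for $\gamma_1,\gamma_2$). Given $\varepsilon>0$, choose $k>0$ large enough that $\gamma_1(k)<\varepsilon$ and $\gamma_2(k)<\varepsilon$. Then $\Omega\!\left(\tfrac\pi2-\varepsilon,\tfrac\pi2-\varepsilon\right)\subset\Omega\!\left(\tfrac\pi2-\gamma_1(k),\tfrac\pi2-\gamma_2(k)\right)$, so the extension of $\mathbf{F}^k$ already provided restricts to an analytic extension to the smaller sector, as claimed.

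Honestly, this argument is short and I do not anticipate a serious obstacle; the only point needing a little care is verifying that $\mathbf{F}^k$ is genuinely a semigroup on $\Pi$ (equivalently, that $\Pi_k$ is $F_t$-invariant) and that its generator is the shift of $f$ — both follow from the monotonicity of $\Re F_t$ and a one-line differentiation. A secondary subtlety is the sign/orientation bookkeeping in translating \eqref{control-gen-condition} into the form of condition (ii) of Theorem~\ref{sector-basic}; as long as one tracks that a bound $\arg f\le\gamma_2$ corresponds to $\theta_2=\tfrac\pi2-\gamma_2$ and similarly on the other side, everything lines up.
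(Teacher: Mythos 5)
Your proof is correct and follows exactly the paper's route: the paper disposes of this theorem in one line by invoking the implication (ii)$\Rightarrow$(i) of Theorem~\ref{sector-basic} for the shifted semigroup $\mathbf{F}^k$ with generator $f(\cdot+k)$, which is precisely what you do, with the invariance of $\{z:\Re z>k\}$ and the bookkeeping $\theta_j=\frac{\pi}{2}-\gamma_j(k)$ spelled out. No gaps.
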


The result is a direct consequence of the equivalence
(ii)$\Leftrightarrow$(i) of Theorem~\ref{sector-basic}.

In fact, the semigroups $\mathbf{F}^k$ that appear in
Theorem~\ref{thm-ector-angle-general} are the restrictions of the
original semigroup $\mathbf{F}$ to sub-half-planes of $\Pi$, so
Theorem~\ref{thm-ector-angle-general} gives a criterion for the
analytic extendibility of a semigroup considered not on the whole
domain but on its invariant subsets. It follows from \cite{E-S-T}
that the restriction of a semigroup on $\D$ with the Denjoy--Wolff
point $\tau=0$ to an invariant subset (which is a disk of radius
$r<1$) can be analytically extended to a sector, and the sector is
wider the smaller $r$ is. In our case, the situation is different.
As the following example shows, in general, the restriction of a
semigroup to invariant subsets need not increase the size of the
sector to which the semigroup can be extended.

\begin{examp}\label{exam2}
Fix $\alpha\in(0,1)$, and consider the semigroup generator defined
by ${f(z)=z^{1-\alpha}}$. Obviously, $|\arg
f(z)|\le\frac{\pi(1-\alpha)}2$, for all $z\in\Pi$; hence, by
Theorems~\ref{sector-basic}--\ref{thm-ector-angle-general}, the
generated semigroup ${\mathbf{F}=\{F_t\}_{t\ge0}}$ can be extended
to the sector $\Omega\left(\frac{\pi\alpha}2, \frac{\pi\alpha}2
\right)$. This semigroup can be found directly as the unique
solution of the Cauchy problem
\[
\left\{
\begin{array}{l}
{\displaystyle\frac{\partial F_t(z)}{\partial t}} =
(F_t(z))^{1-\alpha},\vspace{2mm}\\
F_0(z)=z.
\end{array}
\right.
\]
So, $F_t(z)=\left(\alpha t+z^\alpha \right)^{1/\alpha}$. For each
$k>0$, the restriction of $\mathbf{F}$ to the half-plane $\{z:\Re
z>k\}$, or, more precisely, the semigroup
$\mathbf{F}^k=\{\Phi_{t}\}_{t\geq 0}\in\Hol(\Pi)$ defined by
$\Phi_{t}(z)=F_{t}(z+k)-k$, is generated by
$f^k(z)=(z+k)^{1-\alpha}.$ Therefore, $\mathbf{F}^k$ can be
analytically extended to the same sector
$\Omega\left(\frac{\pi\alpha}2, \frac{\pi\alpha}2 \right)$ but to
no larger sector.
\end{examp}

It turns out that the same control functions which prescribe in
Theorem~\ref{thm-ector-angle-general} the size of the sector of
analyticity for a semigroup provide a localization of semigroup
trajectories in the sense of differential inclusions.

\begin{propo}\label{th-image-domain}
Let $\{F_{t}\}_{t\geq 0}\in\Hol(\Pi)$ be a semigroup generated by
${f\in\Hol(\Pi,\C)}$. Suppose that there exist continuous
functions ${\gamma_1, \gamma_2:\R^+ \rightarrow
(0,\frac{\pi}{2})}$, such that
\begin{equation}\label{condition_gen}
-\gamma_1(\Re z)\leq \arg f(z)\leq \gamma_2(\Re z) \mbox{for all}
z \in \Pi.
\end{equation}
Then, for each initial point $z\in\Pi$, the trajectory $\{F_t(z),\
t\geq 0\}$ lies in the domain
\begin{equation}\label{domain of traj}
\left\{u+iv:\ u\geq \Re z , \  B_1(u)
 \leq v \leq  B_2(u)\right\},
\end{equation}
where
\begin{equation}\label{B-1-2}
\begin{array}{l}
B_1(u)=\Im z-\int\limits_{\Re z}^u\tan\gamma_1(s) ds, \\
B_2(u)=\Im z+\int\limits_{\Re z}^u \tan\gamma_2(s) ds.
\end{array}
\end{equation}
\end{propo}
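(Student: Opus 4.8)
The plan is to parametrize the trajectory $t\mapsto F_t(z)$ by its real part, turning the two-sided bound on $\arg f$ into a two-sided differential inequality for the imaginary part as a function of the real part.

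First I would fix $z\in\Pi$ and write $F_t(z)=u(t)+iv(t)$, so $u(0)=\Re z$, $v(0)=\Im z$. By the Cauchy problem \eqref{cauchy}, $F_t(z)$ is $C^1$ in $t$ with $\dot F_t(z)=f(F_t(z))$, hence
\[
\dot u(t)=\Re f(F_t(z)),\qquad \dot v(t)=\Im f(F_t(z)).
\]
Condition \eqref{condition_gen} forces $f(w)\neq0$ for every $w\in\Pi$ (otherwise $\arg f(w)$ is undefined) and $|\arg f(w)|<\tfrac\pi2$; consequently $\Re f(w)=|f(w)|\cos(\arg f(w))>0$ on $\Pi$. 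In particular $\dot u(t)>0$ for all $t\ge0$, so $u$ is a strictly increasing $C^1$ map of $[0,\infty)$ onto some interval $[\Re z,L)$ with $L\in(\Re z,+\infty]$. Let $t(\cdot)$ denote its $C^1$ inverse and set $V(u):=v(t(u))$ for $u\in[\Re z,L)$.

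Next I would use the chain rule together with the formulas above to compute
\[
V'(u)=\frac{\dot v(t(u))}{\dot u(t(u))}=\frac{\Im f\bigl(F_{t(u)}(z)\bigr)}{\Re f\bigl(F_{t(u)}(z)\bigr)}=\tan\Bigl(\arg f\bigl(F_{t(u)}(z)\bigr)\Bigr).
\]
Since $\Re F_{t(u)}(z)=u$, hypothesis \eqref{condition_gen} gives $\arg f(F_{t(u)}(z))\in[-\gamma_1(u),\gamma_2(u)]\subset(-\tfrac\pi2,\tfrac\pi2)$, and as $\tan$ is increasing on $(-\tfrac\pi2,\tfrac\pi2)$ we obtain
\[
-\tan\gamma_1(u)\le V'(u)\le\tan\gamma_2(u),\qquad u\in[\Re z,L).
\]
Here $s\mapsto\tan\gamma_i(s)$ is continuous (as $\gamma_i$ is continuous and valued in $(0,\tfrac\pi2)$), hence integrable on compact subintervals; integrating from $\Re z$ to $u$ and using $V(\Re z)=\Im z$ yields $B_1(u)\le V(u)\le B_2(u)$ with $B_1,B_2$ as in \eqref{B-1-2}. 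Finally, for arbitrary $t\ge0$ put $u=\Re F_t(z)\ge\Re z$; then $\Im F_t(z)=v(t)=V(u)\in[B_1(u),B_2(u)]$, which is precisely the assertion that $F_t(z)$ lies in the domain \eqref{domain of traj}.

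The steps needing care are the reparametrization — which is clean once one observes that $\Re f>0$ makes $u$ strictly increasing, so the trajectory is a graph $v=V(u)$ over $[\Re z,L)$ — and the fact that along the trajectory $\arg f$ is controlled by $\gamma_i$ evaluated at the \emph{current} real part rather than at $\Re z$, which is immediate from $\Re F_{t(u)}(z)=u$. (Should one wish to allow $f$ to vanish at an interior point $z_0$ and read \eqref{condition_gen} as vacuous there, the trajectory through $z_0$ is constant and the inclusion is trivial since $B_1(\Re z_0)=B_2(\Re z_0)=\Im z_0$, while univalence of the maps $F_t$ prevents any other trajectory from reaching such a point in finite time.)
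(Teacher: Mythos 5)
Your proof is correct and follows essentially the same route as the paper's: both reparametrize the trajectory by its strictly increasing real part, identify $dv/du$ with $\tan(\arg f)$ along the trajectory, and integrate the resulting two-sided differential inequality. The extra care you take with the inverse parametrization and the degenerate case $f(z_0)=0$ only makes explicit what the paper leaves implicit.
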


\begin{proof} Fix $z\in\Pi$. Separating the real and imaginary parts, we write
\begin{equation*}
F_t(z)={u}_t(z) + i{v}_t(z)\quad \mbox{and}\quad f(z)={\phi}(z) +
i{\psi}(z).
\end{equation*}
Then $u_0=\Re z$, $v_0=\Im z$, $\frac{\partial u_t}{\partial
t}=\phi(u_t+iv_t)>0$, and $\frac{\partial v_t}{\partial
t}=\psi(u_t+iv_t).$ Therefore, along the semigroup trajectory
$\{F_t(z):t\geq 0\}$, the function $u_t$ is strictly increasing.
Hence, $v_t$ can be considered as a function of $u_t$.
Furthermore, $ \frac{dv_t}{du_t}=\tan\left(\arg
f(u_t+iv_t)\right).$ Consequently, by \eqref{condition_gen},
\begin{equation*}
-\tan\gamma_1(u_t)\leq \frac{dv_t}{du_t}\leq \tan\gamma_2(u_t).
\end{equation*}
Integrating this inequality with respect to $u_t$, we obtain
\begin{equation*}
-\int_{u_0}^{u_t}  \tan\gamma_1(s) ds \leq v_t(u_t)-v_0 \leq
\int_{u_0}^{u_t} \tan\gamma_2(s) ds
\end{equation*}
for all $t\ge 0$. The proof is complete.
\end{proof}

\begin{examp}\label{exam3}
Let $f\in\Hol(\Pi)$ be defined by $f(z)=\dfrac{z+a}{z+b}$, $0\le
a<b$. Since ${\lim\limits_{z\to\infty}\dfrac{f(z)}{z+ 1 }=0}$, $f$
generates a semigroup of parabolic type which is the unique
solution of the Cauchy problem
\[
\left\{
\begin{array}{l}
{\displaystyle\frac{\partial F_t(z)}{\partial t}} =
\dfrac{F_t(z)+a}{F_t(z)+b} \vspace{2mm}\\
F_0(z)=z, \, z\in \Pi,
\end{array}
\right.
\]
and hence satisfies the functional equation
\[
F_t(z) + (b-a)\log(F_t(z)+a)=t+z +(b-a)\log(z+a).
\]
This equation, being transcendental, does not allow localization
of the semigroup trajectories. Nevertheless, it is easy to see
that
\[
\left| \arg f(z) \right| \le \arctan\frac{b-a}{2\sqrt{(\Re z+a)
(\Re z+b)}}=:\gamma(\Re z)\,.
\]
Therefore, by Proposition~\ref{th-image-domain},
\[
\left|\Im F_t(z)-\Im z\right| \le \frac{b-a}2
\log\frac{\frac{a+b}2+\Re F_t(z)+\sqrt{(\Re F_t(z)+a)(\Re F_t(z)+b)}}{\frac{a+b}2+\Re z +
\sqrt{(\Re z+a)(\Re z+b)}}
\]
for all $z\in \Pi$ and $t\ge 0$.

Furthermore, according to Theorems~\ref{sector-basic} and
\ref{thm-ector-angle-general}, the semigroup $\mathbf{F}$
restricted to the half-plane $\{z:\Re z>k\}$ can be analytically
extended to the sector
$\Omega\left(\frac\pi2-\gamma(k),\frac\pi2-\gamma(k)\right)$. A
direct calculation shows that for all $\varepsilon>0$ and
$k>\dfrac{b-a-(a+b)\sin\varepsilon} {2\sin\varepsilon}$, the
restricted semigroup $\mathbf{F}^k$ has an analytic extension to
the sector $\Omega\left(\frac\pi2-\varepsilon,
\frac\pi2-\varepsilon\right).$
\end{examp}

Theorem~\ref{thm-ector-angle-general} can also be used to study
the important classes $\GP,\  \alpha>0,$ of semigroup generators
of the form
\begin{equation}\label{phi1a}
f(z)= A(z+1)^{1-\alpha} +\varrho(z), \quad \mbox{ where } \quad
\lim_{z\to\infty}\frac{\varrho(z)}{(z+1)^{1-\alpha}}=0
\end{equation}
and $A >0$. Note that for $f \in \GP$,
\begin{equation}\label{arglim}
\lim\limits_{\Re z\to\infty}\arg \frac{f(z)}{A(z+1)^{1-\alpha}}=0.
\end{equation}
These classes were first introduced in \cite{E-S-Y} for semigroup
generators in the open unit disk and then studied in more detail
in \cite{E-K-R-S, F-M2011}. In particular, it was shown that if $f
\in\GP$, then $\alpha \le 2$ and $|\arg A|\le \frac{\pi}{2}\,\min
\{\alpha, 2-\alpha\}$. For $0<\alpha<2$, we now consider the
slightly narrower class of generators for which $|\arg A|
<\frac{\pi}{2}\,\min \{\alpha, 2-\alpha\}$. The functions
$\delta_1,\delta_2$ defined on $(0,\infty)$ by
\begin{eqnarray}\label{delta}
\begin{array}{l}
\displaystyle \delta_1(s)=-\inf_{\Re z\ge s}\arg \frac{f(z)}
{A(z+1)^{1-\alpha}}, \\
\displaystyle \delta_2(s)=\sup_{\Re z\ge s}\arg
\frac{f(z)}{A(z+1)^{1-\alpha}}
\end{array}
\end{eqnarray}
are non-increasing and, by~\eqref{arglim}, tend to $0$ as
$s\to\infty$. Moreover, for every $z\in\Pi$,
\begin{eqnarray*}
(1-\alpha)\arg(1+z) +\arg A -\delta_1(\Re z) \le \arg f(z)\\
\le (1-\alpha)\arg(1+z) +\arg A +\delta_2(\Re z),
\end{eqnarray*}
that is, inequality~\eqref{control-gen-condition} holds with
\begin{eqnarray*}
\displaystyle \gamma_1(s)=\frac{|1-\alpha|\pi}2 -\arg A
+\delta_1(s), \\
\displaystyle \gamma_2(s)=\frac{|1-\alpha|\pi}2 +\arg A
+\delta_2(s).
\end{eqnarray*}
This establishes the following consequence of
Theorem~\ref{thm-ector-angle-general}.

\begin{corol}\label{th-sector-angle1}
Let $\{F_{t}\}_{t\geq 0}\in\Hol(\Pi)$ be a semigroup generated by
$f \in \GP,\ \alpha\in(0,2),$ such that \eqref{phi1a} holds with
$|\arg A| <\frac{\pi}{2}\,\min \{\alpha, 2-\alpha\}$. Then for
each $k>0$, the semigroup $\mathbf{F}^k=\{\Phi_{t}\}_{t\geq
0}\in\Hol(\Pi)$ defined by $\Phi_{t}(z)=F_{t}(z+k)-k$ can be
analytically extended to the sector
$\Omega\left(\theta_1(k),\theta_2(k)\right)$, where
\begin{eqnarray*}
\displaystyle \theta_1(k)=\frac{\pi}2\,\min \{\alpha, 2-\alpha\}
+\arg A
-\delta_1(k), \\
\displaystyle \theta_2(k)=\frac{\pi}2\,\min \{\alpha, 2-\alpha\}
-\arg A -\delta_2(k),
\end{eqnarray*}
and $\delta_1,\delta_2$ are defined by \eqref{delta}.
Consequently, for each sector $\widetilde{\Omega}$ such that
$$\overline{\widetilde{\Omega}}\setminus\{0\}\subset \Omega\left(\frac{\pi}2\,\min \{\alpha, 2-\alpha\} +\arg A,
\frac{\pi}2\,\min \{\alpha, 2-\alpha\} -\arg A \right),$$ there
exists $k>0$ such that $\mathbf{F}^k$ has an analytic extension
to~$\tilde\Omega$.
\end{corol}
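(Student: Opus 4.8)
The plan is to reduce Corollary~\ref{th-sector-angle1} directly to Theorem~\ref{thm-ector-angle-general} by verifying that the hypothesis \eqref{control-gen-condition} holds with the explicit control functions announced just before the statement. First I would fix $f\in\GP$ of the form \eqref{phi1a} with $|\arg A|<\frac\pi2\min\{\alpha,2-\alpha\}$. The starting point is the pointwise argument identity
\[
\arg f(z)=(1-\alpha)\arg(1+z)+\arg A+\arg\frac{f(z)}{A(z+1)^{1-\alpha}},
\]
valid for all $z\in\Pi$ (choosing the branch of $(z+1)^{1-\alpha}$ that is real positive on the positive axis, so that $\arg(1+z)\in(-\frac\pi2,\frac\pi2)$). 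By the definition \eqref{delta} of $\delta_1,\delta_2$ we have, for every $z$ with $\Re z\ge s$,
\[
-\delta_1(\Re z)\le \arg\frac{f(z)}{A(z+1)^{1-\alpha}}\le \delta_2(\Re z),
\]
and since $-\frac\pi2<\arg(1+z)<\frac\pi2$, multiplying by $1-\alpha$ (whose sign is tracked via $|1-\alpha|$) yields $|(1-\alpha)\arg(1+z)|<\frac{|1-\alpha|\pi}2$. Combining these gives exactly
\[
-\gamma_1(\Re z)\le \arg f(z)\le \gamma_2(\Re z)
\]
with $\gamma_1(s)=\frac{|1-\alpha|\pi}2-\arg A+\delta_1(s)$ and $\gamma_2(s)=\frac{|1-\alpha|\pi}2+\arg A+\delta_2(s)$, which is precisely the chain of inequalities displayed before the corollary.

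Next I would check that $\gamma_1,\gamma_2$ actually take values in $(0,\frac\pi2)$ so that Theorem~\ref{thm-ector-angle-general} applies: positivity is clear since $\delta_i\ge 0$ and $\frac{|1-\alpha|\pi}2\pm\arg A>0$ follows from $|\arg A|<\frac\pi2\min\{\alpha,2-\alpha\}\le\frac{|1-\alpha|\pi}2+$ (the elementary identity $\frac{|1-\alpha|\pi}2+\frac\pi2\min\{\alpha,2-\alpha\}=\frac\pi2\max\{\alpha,2-\alpha\}\cdot$ — more simply, $\frac{|1-\alpha|\pi}2\ge\frac\pi2\min\{\alpha,2-\alpha\}>|\arg A|$ for $\alpha\in(0,2)$ is false in general, so here one uses instead that $\delta_i\to0$ and $\arg f$ stays in $(-\frac\pi2,\frac\pi2)$ as a generator, giving $\gamma_i\le\frac\pi2$ directly). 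In fact the cleanest route is: $f$ is a generator with Denjoy--Wolff point at $\infty$, so by the discussion following Proposition~\ref{prop-DW} (and \eqref{phi1a}) one has $\Re f(z)\ge0$, hence $|\arg f(z)|\le\frac\pi2$, which together with monotonicity of $\delta_i$ forces $\gamma_i(s)\in(0,\frac\pi2]$; a short separate argument rules out equality $\frac\pi2$ except in degenerate cases, or one simply invokes Theorem~\ref{thm-ector-angle-general} on the interior. With \eqref{control-gen-condition} in hand, Theorem~\ref{thm-ector-angle-general} gives that $\mathbf F^k=\{\Phi_t\}$ with $\Phi_t(z)=F_t(z+k)-k$ extends analytically in $t$ to $\Omega\!\left(\frac\pi2-\gamma_1(k),\frac\pi2-\gamma_2(k)\right)$, and substituting the formulas for $\gamma_1,\gamma_2$ and using $\frac\pi2-\frac{|1-\alpha|\pi}2=\frac\pi2\min\{\alpha,2-\alpha\}$ converts this into $\Omega(\theta_1(k),\theta_2(k))$ with the stated $\theta_1,\theta_2$.

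For the final ``Consequently'' clause I would argue as follows: since $\delta_1,\delta_2$ are non-increasing and tend to $0$ as $k\to\infty$ (by \eqref{arglim}), the angles $\theta_1(k),\theta_2(k)$ increase with $k$ and satisfy
\[
\lim_{k\to\infty}\theta_i(k)=\frac\pi2\min\{\alpha,2-\alpha\}\mp\arg A,
\]
so $\Omega(\theta_1(k),\theta_2(k))$ exhausts the limiting sector $\Omega\!\left(\frac\pi2\min\{\alpha,2-\alpha\}+\arg A,\ \frac\pi2\min\{\alpha,2-\alpha\}-\arg A\right)$ from inside. Given any sector $\widetilde\Omega$ with $\overline{\widetilde\Omega}\setminus\{0\}$ contained in this limiting sector, compactness of $\overline{\widetilde\Omega}\cap\{|\zeta|=1\}$ yields a gap $\varepsilon>0$ between its boundary rays and those of the limiting sector; choosing $k$ large enough that $\delta_1(k),\delta_2(k)<\varepsilon$ ensures $\widetilde\Omega\subset\Omega(\theta_1(k),\theta_2(k))$, and restricting the extension of $\mathbf F^k$ to $\widetilde\Omega$ finishes the proof.

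I expect the only genuinely delicate point to be the bookkeeping of branches and signs in the argument identity — making sure the chosen branch of $(1+z)^{1-\alpha}$ is consistent with the one implicit in the definition \eqref{delta} of $\delta_1,\delta_2$, and that $(1-\alpha)\arg(1+z)$ is bounded by $\frac{|1-\alpha|\pi}2$ with the correct sign conventions when $\alpha>1$. Everything else is a direct appeal to Theorem~\ref{thm-ector-angle-general} together with the monotonicity and vanishing of the $\delta_i$.
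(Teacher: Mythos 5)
Your proposal is correct and follows essentially the same route as the paper: the paper's entire ``proof'' is the paragraph preceding the corollary, which records the decomposition $\arg f(z)=(1-\alpha)\arg(1+z)+\arg A+\arg\frac{f(z)}{A(z+1)^{1-\alpha}}$, bounds the last term by $\delta_1,\delta_2$ from \eqref{delta}, reads off the control functions $\gamma_1,\gamma_2$, and invokes Theorem~\ref{thm-ector-angle-general} (using $\frac\pi2-\frac{|1-\alpha|\pi}2=\frac\pi2\min\{\alpha,2-\alpha\}$), exactly as you do, with your exhaustion argument for the final clause being the obvious one the paper leaves implicit. Your mid-proof digression about verifying $\gamma_i\in(0,\frac\pi2)$ is garbled (the claimed identity involving $\max\{\alpha,2-\alpha\}$ is false; in fact $\frac{|1-\alpha|\pi}2+\frac\pi2\min\{\alpha,2-\alpha\}=\frac\pi2$), but the fallback via $\Re f\ge0$ is serviceable, and the paper itself silently glosses over the same normalization issue.
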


\begin{remark}
Note that the union $\bigcup_ {\alpha \in (0,2]}\GP$ provides a
wide class of semigroup generators for which the so-called
Slope-problem has affirmative answer. This problem that was posed
in \cite{C-DM} and stayed open for about ten years, can be
formulated as follows. For $z\in \Pi$, is the set
$Slope^+(F_t(z))$ of all limit points of the curve $\left\{(t,\arg
F_t(z)): t\in (0,\infty)\right\}$ as $t\to\infty$ always a
singleton?  This question was answered negatively in general quite
recently in \cite{Bet} and \cite{C-DM-sl}.

Let $\{F_t\}_{t>0}$ and $f$ satisfy the hypotheses of
Proposition~\ref{th-image-domain}, and let $z\in\Pi$. Assume that
$\limsup\limits_{u \to \infty}\frac{B_1(u)}{u}>\liminf\limits_{u
\to \infty}\frac{B_2(u)}{u}$ , where $B_1$ and $B_2$ are defined
by \eqref{B-1-2}. Proposition \ref{th-image-domain} implies that
$Slope^+(F_t(z))$ is a segment. Moreover, the parabolic type
semigroup generated by $f_n$, where $f_n(z)=f(z)^{\frac{1}{n+1}},$
has an analytic extension to a sector with angle close to $\pi$,
and $Slope^+(F_t(z))$ is not a singleton.
\end{remark}

\section{Semigroups of composition operators}
\setcounter{equation}{0}

In this section, for a given semigroup
${\mathbf{F}=\{F_{t}\}_{t\geq 0}\subset\Hol(\Pi)}$, we study the
semigroup of composition operators $C_t: \phi\mapsto \phi\circ
F_t$ on Hardy spaces $H^p(\Pi), \, p \ge 1$, and solve the problem
of its analytic extension.

It was shown in \cite[Theorem 3.3]{Arv} that if $f$ generates a
semigroup $\mathbf{F}\subset\Hol(\Pi)$ which induces a semigroup
$\mathbf{T}$ of bounded composition operators on $H^p(\Pi)$, then
$\mathbf{T}$ is generated by the operator $\Gamma:\varphi\mapsto
\varphi'f$. From the point of view of the Lumer--Phillips theorem,
our first result is, in a sense, a converse assertion.

\begin{theorem}\label{th-flow-dissip}
Let $f\in \Hol(\Pi,\C)$ be such that $\dfrac{f(z)}{z+1}$ is
bounded in $\{z:\ \Re z>0,\ |z|>R\}$  for some $R>0$. Fix
$p\in(1,\infty)$ and define the linear operator $\Gamma:=\Gamma_p$
on the domain $\mathcal{D}_p:=\left\{\varphi\in H^p(\Pi):\
\varphi'\cdot f\in H^p(\Pi) \right\}$ by $\Gamma\varphi(z)
=\varphi'(z) f(z)$. If $\Gamma$ is dissipative, then $f$ is a
semigroup generator in $\Pi$. If, in addition, $I-\Gamma$ is
surjective, then $f$ generates a semigroup with the Denjoy--Wolff
point at $\infty$.
\end{theorem}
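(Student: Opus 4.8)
The plan is to exploit the Lumer--Phillips theorem in reverse: we are handed the dissipativity of $\Gamma$ on $H^p(\Pi)$ and we want to conclude that $f$ is a semigroup generator on $\Pi$, i.e. that $f$ satisfies the flow-invariance condition \eqref{flow-inv} of Proposition~\ref{propo-flow-inv}. The bridge between the Banach-space statement and the pointwise condition on $f$ should be the evaluation and point-mass type functionals on $H^p(\Pi)$. First I would record the standard fact that point evaluation $\varphi\mapsto\varphi(z_0)$ is a bounded linear functional on $H^p(\Pi)$ for each $z_0\in\Pi$, represented by an explicit reproducing-type kernel (a power of the Cauchy/Szeg\H{o} kernel $(z+\overline{z_0})^{-1}$ adjusted for the exponent $p$), and more generally that $\varphi\mapsto\varphi'(z_0)$ is bounded. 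The hypothesis that $f(z)/(z+1)$ is bounded near infinity guarantees that enough test functions $\varphi$ (e.g. suitable rational functions, or more carefully $(z+\beta)^{-\lambda}$ for large $\lambda$) lie in $\mathcal{D}_p$, so that $\mathcal{D}_p$ is dense and rich enough to probe $f$ at every point.

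The key computational step is to choose, for a fixed $z_0\in\Pi$, a test function $\varphi_{z_0}\in\mathcal{D}_p$ whose norm is attained in a way that makes the support functional $x^*$ in the dissipativity inequality $\Re\langle\Gamma\varphi,x^*\rangle\le 0$ computable in closed form. The natural candidate is a (normalized) power of the Szeg\H{o} kernel, $\varphi_{z_0}(z)=c_{z_0}(z+\overline{z_0})^{-2/p}$ or a similar exponent dictated by $H^p$, because for such functions the duality pairing with the corresponding kernel in $H^{p'}$ is explicit, and the support functional is essentially $|\varphi_{z_0}|^{p-2}\overline{\varphi_{z_0}}$ against Lebesgue measure on the boundary, whose action on $\Gamma\varphi_{z_0}=\varphi_{z_0}'f$ reduces, after a residue/Poisson computation, to a pointwise expression in $f(z_0)$, $z_0$, and $f(1)$. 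Carrying this out should reproduce exactly inequality \eqref{flow-inv} (up to the harmless normalization $h(1)=0$ built into the Cayley picture), after which Proposition~\ref{propo-flow-inv} yields that $f$ generates a semigroup $\{F_t\}\subset\Hol(\Pi)$.

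For the second assertion, assume additionally that $I-\Gamma$ is surjective. By Lumer--Phillips (as quoted in the introduction), $\Gamma$ then generates a contractive $C_0$-semigroup $\mathbf{T}=\{T(t)\}$ on $H^p(\Pi)$; since $\Gamma\varphi=\varphi'f$ agrees with the infinitesimal generator of the composition semigroup $C_t:\varphi\mapsto\varphi\circ F_t$ on a common core (differentiating $\varphi\circ F_t$ at $t=0$ using the Cauchy problem \eqref{cauchy}), uniqueness of the generated semigroup forces $T(t)=C_t$, so the $C_t$ are contractions on $H^p(\Pi)$. Contractivity of the composition operators on $H^p(\Pi)$ constrains the geometry of $F_t$: a non-expansion estimate on Hardy norms forces $\Re F_t(z)$ to dominate $\Re z$ in an averaged sense, which rules out a finite Denjoy--Wolff point on $\partial\Pi$ and rules out the hyperbolic case (where $C_t$ would have norm growing like $e^{at}$ with $a>0$, cf. the angular-derivative-at-infinity description of bounded $C_F$ on $H^p(\Pi)$ from \cite{E-J-12}). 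Hence the Denjoy--Wolff point must be $\infty$. I expect the main obstacle to be the second step: pinning down the support functional for the chosen $\varphi_{z_0}$ precisely enough that the dissipativity inequality collapses to \eqref{flow-inv} rather than to a weaker consequence of it — this requires care with the exact exponent in the Szeg\H{o}-kernel test function and with boundary-value (nontangential-limit) justifications for the pairing on $\partial\Pi$.
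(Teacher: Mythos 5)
Your overall architecture is the paper's: use the explicit (unique, $p>1$) support functional \eqref{functional} on the boundary, feed the dissipativity inequality well-chosen test functions in $\mathcal{D}_p$, evaluate the pairing by a Cauchy/residue computation, land on the flow-invariance condition \eqref{flow-inv}, and invoke Proposition~\ref{propo-flow-inv}; for the second assertion, Lumer--Phillips plus the norm formula $\|T(t)\|=e^{-\delta t/p}$ from \cite{Arv} forces $\delta\ge0$ and hence the Denjoy--Wolff point at $\infty$. That second part of your argument is essentially the paper's, up to one imprecision: what contractivity excludes is a negative angular derivative of $f$ at $\infty$ (i.e.\ a Denjoy--Wolff point elsewhere), not the hyperbolic case with Denjoy--Wolff point at $\infty$, which remains contractive.

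The genuine gap is exactly the one you flag and leave unresolved: the choice of test functions. A single normalized Szeg\H{o}-kernel power $\varphi_{z_0}(z)=c_{z_0}(z+\overline{z_0})^{-2/p}$ has $\varphi_{z_0}'/\varphi_{z_0}=-\tfrac{2}{p}(z+\overline{z_0})^{-1}$, whose only pole lies in the left half-plane, so the residue computation collapses the pairing to a multiple of $f(z_0)$ alone; it cannot produce the two-point inequality \eqref{flow-inv}, which couples $f(a)$ with $f(1)$. The paper's device, which is the missing idea, is the family
\[
\varphi_{n}(z)=\frac{(z-a)^n}{(z+1)^{2/p}\,(z+\overline{a})^n},\qquad n\in\N,
\]
i.e.\ a fixed outer factor anchored at the normalization point $1$ multiplied by the $n$-th power of the inner factor $\frac{z-a}{z+\overline{a}}$. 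The inner factor is unimodular on $\partial\Pi$, so $|\varphi_n(iy)|^p=(y^2+1)^{-1}$ for every $n$ and $a$; the norm and the support functional's weight are unchanged, while the logarithmic derivative acquires the terms $\frac{n}{z-a}-\frac{n}{z+\overline{a}}$. The resulting pairing is affine in $n$, with the coefficient of $n$ equal (up to the positive factor $2/|a-1|^2$) to the difference of the two sides of \eqref{flow-inv}, and letting $n\to\infty$ discards the $n$-independent term $-f(1)/p$ and yields \eqref{flow-inv} exactly. Without this one-parameter family and the passage $n\to\infty$, your computation either stalls or delivers only the pointwise inequality $\Re f(z_0)\ge0$, whose derivation would in any case require justifying that the Szeg\H{o} kernels lie in $\mathcal{D}_p$ and that the boundary pairing is computed by the Poisson/Cauchy formula --- precisely the analytic points you defer. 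So the plan is right but the decisive computational step is absent.
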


\begin{proof}
Recall that the dissipativity of the operator $\Gamma$ means that
for each $\varphi\in\mathcal{D}_p$,  $\|\varphi\|=1$, there exists
a support functional $\varphi^*$ such that $ \Re\langle
\Gamma\varphi,\varphi^*\rangle \le0.$ Note that in the space
$H^p(\Pi),\, p>1,$ the support functional is unique and can be
realized as follows:
\begin{equation}\label{functional}
\varphi^*(\psi)=\langle \psi,\varphi^*\rangle =\frac1\pi
\int_{-\infty}^\infty \psi(iy)
\frac{|\varphi(iy)|^p}{\varphi(iy)}\,dy.
\end{equation}

For $a\in\Pi\setminus\{1\}$ and $n\in\N$, set $\varphi_{
n}(z)=\dfrac{(z-a)^n}{(z+1)^{\frac2p}(z+\overline{a})^n}\,.$ First
we show that $\varphi_n\in\mathcal{D}_p$ and $\|\varphi_n\|=1$ for
each $n$. Indeed,
\[
|\varphi_n(z)|^p=\dfrac1{|z+1|^2}\left|\dfrac{z-a}{z+\overline{a}}
\right|^n \le \dfrac1{|z+1|^2} \,,\quad
|\varphi_n(iy)|^p=\dfrac1{y^2+1}\,.
\]
Therefore $\varphi_n\in H^p(\Pi)$ and $\|\varphi_n
\|^p=\frac1\pi\int_{-\infty}^\infty|\varphi_n(iy)|^pdy=1$. In
addition,
\begin{eqnarray*}
\varphi_n'(z)f(z)&=&\left(f(z)\frac{\varphi_n'(z)}{\varphi_n(z)}\right)
\varphi_n(z) \\
&=& f(z)\left(\frac n{z-a} -\frac2{p(z+1)} - \frac
n{z+\overline{a}} \right)\varphi_n(z).
\end{eqnarray*}
The boundedness of  $\dfrac{f(z)}{z+1}$ implies that $\varphi_n'f
\in H^p(\Pi)$, so $\varphi_n\in\mathcal{D}_p$.

We now calculate $\langle \Gamma\varphi_{n},\varphi_{n}^*\rangle$
directly. Using \eqref{functional}, we have
\begin{eqnarray*}
\langle \Gamma\varphi_n,\varphi_n^*\rangle &=& \langle
\varphi_n'f,\varphi_n^*\rangle = \frac1\pi \int_{-\infty}^\infty
f(iy)\frac{\varphi_n'(iy)}{\varphi_n(iy)}\, |\varphi_n(iy)|^pdy \\
&=& \frac1\pi \int_{-\infty}^\infty f(iy) \left(\frac n{iy-a}
-\frac2{p(iy+1)} - \frac n{iy+\overline{a}}
\right)\frac{dy}{y^2+1}
\end{eqnarray*}
Applying (for instance) the inverse Cayley transform and the
Cauchy formula, we obtain
\begin{eqnarray*}
\langle \Gamma\varphi_n,\varphi_n^*\rangle &=& f(1)\left(-\frac1p-
\frac n{1+\overline{a}} - \frac n{a-1}
\right) +f(a)\frac {2n}{a^2-1} \\
&=& \frac{2n}{|a-1|^2}\left[ f(a)\frac{\overline{a}-1}{a+1} -
f(1)\left(\Re a\cdot \frac{\overline{a}-1}{\overline{a}+1}\right)
\right]-\frac{f(1)}{p} .
\end{eqnarray*}
Since $\Re \langle \Gamma\varphi_n,\varphi_n^*\rangle\le0$ for
arbitrarily large $n$, it follows that
\[
\Re\left[ f(a)\frac{\overline{a}-1}{a+1} - f(1) \Re a\cdot
\frac{\overline{a}-1}{\overline{a}+1} \right] \le 0.
\]
Thus, by Proposition~\ref{propo-flow-inv}, $f$ is a semigroup
generator.

If $I-\Gamma$ is surjective, then by the Lumer--Phillips theorem,
$\Gamma$ generates a semigroup $\mathbf{T}=\{T(t),\, t\ge0\}$ with
$\|T(t)\|\le1$ for all $t\ge0$. On the other hand, by
\cite[Theorem~2.8]{Arv}, $\|T(t)\|=e^{-\delta t/p}$, where
$\delta$ is the angular derivative of $f$ at $\infty$. Therefore,
$\delta\ge0$. Hence, $f$ generates a semigroup with the
Denjoy--Wolff point at $\infty$.
\end{proof}
Next we establish a characterization of composition operators on
the Hardy spaces (cf. \cite[Theorem 2.5]{A-C-P}). For
$n=0,1,2,\ldots$, define $e_n\in H^p(\Pi)$ by
$e_n(z)=\dfrac{(z-1)^n}{(z+1)^{\frac2p+n}}\,.$

\begin{theorem}\label{th-charact-comp-operator}
Let $1\leq p<\infty$, and let $T$ be a bounded linear operator on
the Hardy space $H^p (\Pi)$. Then $T$ is a composition operator if
and only if all of the following three conditions hold:
\begin{itemize}
\item[{\rm(a)}] the function $F:=\dfrac1{(Te_0)^{\frac p2}}-1$ is
well defined and belongs to $\Hol(\Pi)$;

\item[{\rm(b)}] the angular derivative
$\angle\lim\limits_{z\to\infty}\dfrac{F(z)}z$ exists and is
different from zero;

\item[{\rm(c)}] the operator $T$ satisfies $Te_n= Te_0 \cdot
\left(1-2\left(Te_0\right)^{\frac p2} \right)^n$ for all $n \in
\N$.
\end{itemize}
Moreover, if these conditions hold, then $T\phi=\phi\circ F$ for
all $\phi\in H^p(\Pi)$.
\end{theorem}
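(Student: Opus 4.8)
I would establish the equivalence by proving the two implications separately, since both are essentially instances of the classical dictionary between self-maps and composition operators; the only real work is in the ``if'' direction, and within it a single step.

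\emph{Necessity.} Suppose $T=C_G$ with $G\in\Hol(\Pi)$ and $C_G$ bounded on $H^p(\Pi)$. Then $Te_0=e_0\circ G=(G+1)^{-2/p}$, so, fixing branches of the fractional powers consistently, $(Te_0)^{p/2}=(G+1)^{-1}$ and therefore $F=(Te_0)^{-p/2}-1=G\in\Hol(\Pi)$; this is~(a). Boundedness of $C_G$ on $H^p(\Pi)$ together with \cite{E-J-12} says precisely that $G$ has a finite nonzero angular derivative at $\infty$, i.e.~(b). Finally,
\[
Te_n=e_n\circ G=e_0(G)\left(\frac{G-1}{G+1}\right)^n=Te_0\left(1-\frac{2}{G+1}\right)^n=Te_0\left(1-2(Te_0)^{p/2}\right)^n,
\]
which is~(c).

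\emph{Sufficiency.} Assume (a)--(c) and let $F$ be the function from~(a); then $Te_0$ is zero-free on $\Pi$ and (again fixing branches) $(Te_0)^{p/2}=(F+1)^{-1}$, $Te_0(z)=(F(z)+1)^{-2/p}$. Put $m:=1-2(Te_0)^{p/2}=\dfrac{F-1}{F+1}\in\Hol(\Pi)$; condition~(c) then reads $Te_n(z)=Te_0(z)\,m(z)^n$ for all $n\ge0$. The decisive step is to show that $F$ maps $\Pi$ into itself, equivalently that $|m|<1$ on $\Pi$. Here I would use that, under the standard isometric isomorphism $H^p(\D)\cong H^p(\Pi)$, the monomial $w^n$ corresponds to $e_n$ up to a constant independent of $n$, so the norms $\|e_n\|_{H^p(\Pi)}$ are all equal; combined with the boundedness of the evaluation functional at a fixed $z_0\in\Pi$ on $H^p(\Pi)$, this yields $|Te_n(z_0)|\le C(z_0)\|T\|$ with $C(z_0)$ independent of $n$. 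Since $Te_0(z_0)\ne0$, the quantities $|m(z_0)|^n=|Te_n(z_0)|/|Te_0(z_0)|$ stay bounded in $n$, forcing $|m(z_0)|\le1$. Thus $m\colon\Pi\to\overline{\D}$ is holomorphic and, by the open mapping theorem, either $m(\Pi)\subset\D$ or $m$ is constant; a constant $m$ would make $F$ a constant (a finite imaginary one if $|m|=1$, an interior point of $\Pi$ if $|m|<1$; the value $m\equiv1$ is impossible), contradicting~(b). Hence $m(\Pi)\subset\D$ and $F=C\circ m$ maps $\Pi$ into $\Pi$.

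It remains to conclude. Once $F\in\Hol(\Pi)$, the operator $C_F$ is a bona fide composition operator, bounded on $H^p(\Pi)$ by~(b) and \cite{E-J-12}; moreover the identities above give $Te_n=e_n\circ F=C_Fe_n$ for every $n\ge0$. Since polynomials are dense in $H^p(\D)$ for $1\le p<\infty$, the linear span of $\{e_n:n\ge0\}$ is dense in $H^p(\Pi)$, so the bounded operators $T$ and $C_F$, agreeing on this span, coincide; this also proves the last assertion $T\phi=\phi\circ F$. The one genuinely delicate point is the self-map step in the sufficiency argument; the rest is routine manipulation of branches of the fractional powers, together with boundedness of point evaluations on $H^p(\Pi)$ and the classical $H^p(\D)\leftrightarrow H^p(\Pi)$ correspondence.
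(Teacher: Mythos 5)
Your proof is correct, and its skeleton coincides with the paper's: necessity is the same direct computation of $Te_n=e_n\circ G$, and sufficiency rests on verifying $e_n\circ F=Te_n$ and then invoking density of $\mathrm{span}\{e_n\}$ together with boundedness of both $T$ and $C_F$ (the latter via condition (b) and \cite{E-J-12}), exactly as in the paper's final ``linearity and continuity'' step, which you merely make more explicit. The one genuine difference is your ``decisive step'' proving that $F$ maps $\Pi$ into itself. The paper does not prove this because, under its standing convention $\Hol(D):=\Hol(D,D)$, condition (a) already \emph{asserts} that $F$ is a self-map of $\Pi$; you read $\Hol(\Pi)$ as merely ``holomorphic on $\Pi$'' and therefore had to derive the self-map property. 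Your derivation is sound: the weighted Cayley isometry sends $w^n$ to $e_n$, so $\|e_n\|_{H^p(\Pi)}$ is independent of $n$; boundedness of point evaluations then bounds $|Te_0(z_0)|\,|m(z_0)|^n$ uniformly in $n$ with $m=\frac{F-1}{F+1}$, forcing $|m|\le1$, and the maximum principle plus condition (b) (which rules out constant $F$, since a constant has zero angular derivative at $\infty$) gives $|m|<1$, i.e.\ $F(\Pi)\subset\Pi$. The net effect is that your argument establishes the theorem under a formally weaker hypothesis (a), which is a small but real strengthening; at the same time, be aware that as the paper states the theorem this step is part of the hypotheses rather than something to be proved.
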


\begin{proof}
Suppose that $T$ is a composition operator, say, $T\phi=\phi\circ
G$ for some $G\in\Hol(\Pi)$. Then $F=\dfrac1{(e_0\circ G)^{\frac
p2}}-1=G$, so condition (a) holds. Since $T$ is bounded, condition
(b) is also satisfied (see \cite{E-J-12}). The verification of
condition (c) is direct.

Conversely, suppose that conditions (a), (b) and (c) hold. We wish
to show that $T\phi=\phi\circ F$. Indeed, $e_0\circ F=Te_0$, and
\begin{eqnarray*}
&&e_n\circ F= \frac{(F(z)-1)^n}{(F(z)+1)^{\frac2p+n}}
=\frac{\left( \frac 1 {(Te_0)^{\frac p2}}-2\right)^n}
{\left(\frac{1}{Te_0}
\right) ^ {\frac p2\left(\frac2p+n\right)}} \\
&&=\left(Te_0 \right)^{1+\frac{pn}2}\left(\frac1{\left(Te_0
\right)^{\frac p2}} -2\right)^n =Te_0
\left(1-2\left(Te_0\right)^{\frac p2} \right)^n  =Te_n
\end{eqnarray*}
for $n=1,2,3\ldots$. Thus, the linearity and the continuity of
both $T$ and the operator of composition with $F$ imply that
$T\phi=\phi\circ F$ for all $\phi\in H^p(\Pi)$.
\end{proof}

Now we are ready to present the main result of this section.

\begin{theorem}\label{th-operator-sem}

Let $f\in \Hol(\Pi,\C)$ be such that $\Re f(z)>0$ for all
$z\in\Pi,$ and ${\lim\limits_{z \to \infty}\dfrac{f(z)}{z+1}=0}$.
Fix $p\in(1,\infty)$ and define the linear operator
$\Gamma:=\Gamma_p$ on the domain $\mathcal{D}_p:=\left\{\varphi\in
H^p(\Pi):\ \varphi'\cdot f\in H^p(\Pi) \right\}$ by
$\Gamma\varphi(z) =\varphi'(z) f(z)$. For all
$\theta_1,\theta_2>0$ such that $\theta_1+\theta_2\leq \pi$, the
following assertions are equivalent:
\begin{itemize}
\item[{\rm(i)}] the function $f$ satisfies inequality
\eqref{gen-in-sector}, that is,
\[
-\frac{\pi}{2}+  \theta_1\leq\arg f(z) \leq \frac{\pi}{2}-
\theta_2,  \quad z \in \Pi;
\]

\item[{\rm(ii)}] the operators $e^{-i\theta_1}\Gamma$ and
$e^{i\theta_2}\Gamma$ generate semigroups of contractions on the
space $H^p(\Pi)$;

\item[{\rm(iii)}] the semigroup $\{T(t), t\geq 0 \}$ generated by
$\Gamma$ can be analytically extended to the semigroup
$\{T(\zeta),\zeta \in \Omega(\theta_1,\theta_2)\}$ consisting of
composition operators with $\|T(\zeta)\|=1$ on $H^p(\Pi)$ for all
$\zeta \in \Omega(\theta_1,\theta_2)$.

\end{itemize}

\end{theorem}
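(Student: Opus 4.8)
The plan is to establish the cycle of implications (i)$\Rightarrow$(iii)$\Rightarrow$(ii)$\Rightarrow$(i), using Theorem~\ref{sector-basic} to pass between the generator $f$ and the semigroup $\mathbf{F}=\{F_\zeta\}$ of holomorphic self-maps, and Arvanitidis' norm formula from \cite{Arv} together with \cite{E-J-12} to control composition operators on $H^p(\Pi)$.

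\textbf{Step (i)$\Rightarrow$(iii).} Assume $f$ satisfies \eqref{gen-in-sector}. Since $\Re f>0$ and $\lim_{z\to\infty}f(z)/(z+1)=0$, the semigroup $\mathbf{F}$ generated by $f$ has Denjoy--Wolff point at $\infty$ (by Proposition~\ref{prop-DW}), so Theorem~\ref{sector-basic} applies and yields the analytic extension $\{F_\zeta\}_{\zeta\in\Omega(\theta_1,\theta_2)}$ with $F_\zeta(z)\to\infty$ along every ray. For each $\zeta\in\Omega(\theta_1,\theta_2)$, write $\zeta=te^{i\theta}$; then $\{F_{se^{i\theta}}\}_{s\ge0}$ is a semigroup generated by $e^{i\theta}f$, which still has its Denjoy--Wolff point at $\infty$ and satisfies $\lim_{z\to\infty}e^{i\theta}f(z)/(z+1)=0$, hence its angular derivative at $\infty$ vanishes. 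By \cite[Theorem~2.8]{Arv} (or rather its argument), the induced composition operators are bounded with operator norm $1$ (angular derivative $0$ gives $\|C\|=e^{0}=1$); this also uses \cite{E-J-12} to guarantee boundedness. Thus $T(\zeta):=C_{F_\zeta}$ is a well-defined family of composition operators of norm $1$; the semigroup law $T(\zeta_1)T(\zeta_2)=T(\zeta_1+\zeta_2)$ follows from the corresponding identity $F_{\zeta_1}\circ F_{\zeta_2}=F_{\zeta_1+\zeta_2}$ proved in Step~3 of Theorem~\ref{sector-basic}. Analyticity of $\zeta\mapsto T(\zeta)$ on $\Omega(\theta_1,\theta_2)$ is obtained by showing $\zeta\mapsto T(\zeta)\varphi$ is holomorphic for $\varphi$ in a dense set (e.g. the $e_n$), using that $\zeta\mapsto F_\zeta(z)$ is holomorphic and that the family is locally bounded in norm. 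Restricting to $\zeta=t>0$ recovers the original $C_0$-semigroup, whose generator is $\Gamma$ by \cite[Theorem~3.3]{Arv}.

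\textbf{Step (iii)$\Rightarrow$(ii).} If $\{T(\zeta)\}$ is an analytic extension to $\Omega(\theta_1,\theta_2)$ with $\|T(\zeta)\|=1$, then for the boundary rays we take the one-parameter families $s\mapsto T(se^{-i\theta_1})$ and $s\mapsto T(se^{i\theta_2})$. A standard argument (the sector $\Omega(\theta_1,\theta_2)$ being open, the rays $\arg\zeta=-\theta_1$ and $\arg\zeta=\theta_2$ lie on its boundary) shows these are $C_0$-semigroups of contractions: strong continuity at $0$ along the boundary ray follows from boundedness on a sector together with strong continuity on the interior (an Abel-type / Vitali argument), and contractivity is inherited as a limit of $\|T(\zeta)\|=1$. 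Their generators are $e^{-i\theta_1}\Gamma$ and $e^{i\theta_2}\Gamma$ respectively, by differentiating along the ray; dissipativity of $e^{-i\theta_1}\Gamma$ and $e^{i\theta_2}\Gamma$ then follows from the Lumer--Phillips theorem.

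\textbf{Step (ii)$\Rightarrow$(i).} Suppose $e^{-i\theta_1}\Gamma$ and $e^{i\theta_2}\Gamma$ generate contraction semigroups; by Lumer--Phillips both are dissipative. Applying Theorem~\ref{th-flow-dissip} to the function $e^{-i\theta_1}f$ (whose associated operator is exactly $e^{-i\theta_1}\Gamma$, and for which $e^{-i\theta_1}f(z)/(z+1)$ is bounded near $\infty$ since $f(z)/(z+1)\to0$) shows that $e^{-i\theta_1}f$ is a semigroup generator in $\Pi$; in particular, by the flow-invariance characterization, $e^{-i\theta_1}f$ maps $\Pi$ in a way compatible with a semigroup, which combined with $\Re f>0$ forces $\Re\bigl(e^{-i\theta_1}f(z)\bigr)\ge0$, i.e.\ $\arg f(z)\ge-\tfrac\pi2+\theta_1$. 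Symmetrically, dissipativity of $e^{i\theta_2}\Gamma$ gives $\arg f(z)\le\tfrac\pi2-\theta_2$. This is exactly \eqref{gen-in-sector}. The point that $\Re(e^{-i\theta_1}f)\ge0$ (rather than merely ``$e^{-i\theta_1}f$ is a generator'') uses that $f$ already has Denjoy--Wolff point at $\infty$, so no repelling or hyperbolic behaviour can reappear after rotation: concretely, the rotated generator still satisfies $e^{-i\theta_1}f(z)/(z+1)\to0$, so Proposition~\ref{prop-DW} applies.

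\textbf{Main obstacle.} The delicate point is the passage (iii)$\Rightarrow$(ii): establishing that an operator semigroup analytic on an \emph{open} sector, with uniformly bounded norm there, restricts to a \emph{strongly continuous} semigroup on each bounding ray. One must rule out a discontinuity at the origin along the boundary direction; the standard remedy is to use the uniform bound $\|T(\zeta)\|\le1$ on the full sector together with a Vitali/normal-families argument applied to the holomorphic $H^p$-valued functions $\zeta\mapsto T(\zeta)\varphi$ on a dense set of $\varphi$, upgrading interior strong continuity to continuity up to the boundary ray. A secondary technical nuisance is the rigorous identification of the generator along each ray as $e^{-i\theta_1}\Gamma$ (resp.\ $e^{i\theta_2}\Gamma$), which requires checking that the domains coincide; this follows from the chain rule $\tfrac{d}{ds}T(se^{-i\theta_1})\varphi\big|_{s=0}=e^{-i\theta_1}\Gamma\varphi$ valid on $\mathcal{D}_p$, plus the fact that a generator is determined by its action on any core.
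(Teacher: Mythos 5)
Your architecture is sound and your implications are individually correct, but you traverse the equivalence in the opposite direction from the paper, and the difference is substantive. The paper proves (i)$\Rightarrow$(ii) first (for each $\theta\in[-\theta_1,\theta_2]$ the rotated function $e^{i\theta}f$ generates a parabolic semigroup whose composition operators have norm $1$ by \cite[Theorem~2.8]{Arv}, and $e^{i\theta}\Gamma$ is the generator by \cite[Theorem~3.3]{Arv}); it then gets (ii)$\Rightarrow$(iii) abstractly, via Lumer--Phillips, the analytic-extension theorem \cite[Theorem~2.4]{Arendt}, and finally Theorem~\ref{th-charact-comp-operator} together with the identity theorem (the functions $\ell_n$) to show the extended operators $T(\zeta)$ are composition operators; and it closes with (iii)$\Rightarrow$(i) using \cite[Lemma~3.2]{Arv} to recognize the boundary-ray operators as a composition semigroup generated by $e^{-i\theta_1}f$, which must be parabolic. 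You instead build $T(\zeta)=C_{F_\zeta}$ directly from the analytically extended holomorphic semigroup of Theorem~\ref{sector-basic}; this buys you the composition-operator property for free, so you never need Theorem~\ref{th-charact-comp-operator} or the Arendt--ter Elst machinery. Your (ii)$\Rightarrow$(i), via Theorem~\ref{th-flow-dissip} and Proposition~\ref{prop-DW}, is exactly the alternative argument the paper itself records in the Remark following the theorem, and is valid.

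The cost of your orientation is that (iii)$\Rightarrow$(ii) becomes load-bearing, and it is precisely the step you leave at sketch level. The first half of your remedy does work: for $\varphi\in\mathcal{D}_p$ one gets $\|T(\zeta)\varphi-\varphi\|\le|\zeta|\,\|\Gamma\varphi\|$ by integrating $T(\cdot)\Gamma\varphi$ along segments inside the sector, and with $\|T(\zeta)\|\le1$ and density this yields strong continuity up to $\zeta=0$, the existence of the strong limits $T(se^{-i\theta_1}):=\lim_{\theta\downarrow-\theta_1}T(se^{i\theta})$, and the contraction-semigroup property on the boundary rays. What still has to be argued is that the generator of the boundary semigroup is $e^{-i\theta_1}\Gamma$ \emph{with domain exactly} $\mathcal{D}_p$ rather than a proper extension of $e^{-i\theta_1}\Gamma|_{\mathcal{D}_p}$; since a generator cannot be properly extended to another generator, assertion (ii) as stated fails unless the domains are shown to coincide. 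The clean way to do this is the paper's Step~3 device: use \cite[Lemma~3.2]{Arv} to see that the boundary operators are composition operators induced by the holomorphic semigroup generated by $e^{-i\theta_1}f$, whence point evaluations identify the generator pointwise as $\varphi\mapsto e^{-i\theta_1}\varphi' f$ and force its domain to be $\mathcal{D}_p$. Alternatively, you could avoid the boundary analysis entirely by proving (iii)$\Rightarrow$(i) on interior rays (where strong continuity is immediate) and passing to the closed inequality \eqref{gen-in-sector} by continuity in $\theta$, then obtaining (ii) from (i) by your own Step~(i)$\Rightarrow$(iii) argument specialized to the two extreme angles.
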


\begin{proof}
First we note that the operator
$\Gamma^{\theta}:=e^{i\theta}\Gamma$ has the same domain
$\mathcal{D}_p$ for every $\theta \in \R$.

{\it Step 1.} Suppose that assertion (i) holds. For $\theta \in
[-\theta_1,\theta_2]$, let $f_\theta(z)=e^{i\theta}f(z)$. By
assumption, $\Re f_\theta(z)\geq0$ and $\lim\limits_{z \to \infty}
\frac{f_\theta(z)}{z+1}= 0$. Therefore, $f_\theta$ generates a
one-parameter parabolic type semigroup $\{F^\theta_t\}_{t\geq 0}
\subset \Hol(\Pi)$ with the Denjoy--Wolff point at $\infty$. Thus,
the composition operators $C^\theta_t$ defined by
$C^\theta_t\phi:=\phi\circ {F^\theta_t}$ are bounded on
$H^p(\Pi)$, and $\|C^\theta_t\|=1$ for all $t\geq 0$ (see
\cite[Theorem~2.8]{Arv} \ and \cite[Corollary~3.5]{E-J-12}). By
\cite[Theorem~3.3]{Arv}, the infinitesimal generator of the
contractive semigroup $\{C^\theta_t\}$ is $\Gamma^{\theta}$. So,
assertion (ii) follows (notice that the implication
(i)$\Rightarrow$(iii) is proved too).

{\it Step 2.} Assume now that (ii) holds. By the Lumer--Phillips
theorem, the operators $\Gamma^{-\theta_1}=e^{-i\theta_1}\Gamma$
and $\Gamma^{\theta_2}=e^{i\theta_2}\Gamma$ are dissipative; and
for each $\lambda >0,$ the operators $\lambda I-
\Gamma^{-\theta_1}$ and $\lambda I- \Gamma^{\theta_2}$ are
surjective. Thus both the resolvent sets of $\lambda I-
\Gamma^{-\theta_1}$ and  of $\lambda I- \Gamma^{\theta_2}$ contain
the right half-plane (see Remark 5.8 in \cite{Pa}). Therefore, for
every $\theta \in
[-\frac{\pi}{2}-\theta_1,\theta_2+\frac{\pi}{2}]$, the operator
$I-e^{i\theta}\Gamma$ is surjective. Moreover,
\begin{equation*}
\Re e^{-i\theta_1}\langle  \Gamma \varphi,
\varphi^*\rangle=\Re\langle e^{-i\theta_1} \Gamma \varphi,
\varphi^*\rangle\le 0
\end{equation*}
for some support functional $\varphi^*$ at the point $\varphi\in
H^p(\Pi)$, which yields
\begin{equation*}
\frac{\pi}{2}+\theta_1 \leq  \arg\langle \Gamma \varphi,
\varphi^*\rangle \leq \frac{3\pi}{2}+\theta_1.
\end{equation*}
Similarly,
\begin{equation*}
\frac{\pi}{2}-\theta_2 \leq  \arg\langle \Gamma \varphi,
\varphi^*\rangle  \leq \frac{3\pi}{2}-\theta_2.
\end{equation*}
Therefore,
\begin{equation*}
\frac{\pi}{2}+\theta_1 \leq  \arg\langle \Gamma \varphi,
\varphi^*\rangle \leq \frac{3\pi}{2}-\theta_2,
\end{equation*}\label{angl-dissipative}
which means that
\begin{equation}
\Re\langle e^{i\theta}\Gamma \varphi, \varphi^*\rangle \leq 0
\end{equation}
for all $\theta \in [-\theta_1,\theta_2]$.

Consequently, for each $\theta \in [-\theta_1,\theta_2]$, the
operator $\Gamma^\theta=e^{i\theta} \Gamma $ is dissipative, and
${I-\Gamma^\theta}$ is surjective. The Lumer--Phillips theorem
then implies that each $\Gamma^\theta$ generates a one-parameter
semigroup of contractions. Moreover, by
\cite[Theorem~2.4]{Arendt}, the contractive semigroup $\{T(t),\
t\ge0\}$ generated by $\Gamma$ can be extended to the semigroup
$\{T(\zeta)\}$ analytic in $\zeta \in \Omega (\theta_1,\theta_2)$.

Note also that by our assumptions, $f$ generates a parabolic type
semigroup $\mathbf{F}=\{F_t\}_{t \geq 0}$ which, in turn, induces
the semigroup of composition operators $\mathbf{T}=\{T(t),t \geq
0\}$ defined by $T(t)\varphi= \varphi\circ F_t$ and generated
by~$\Gamma$. For each $n \in \N$, the function ${\ell_n :
\Omega(\theta_1,\theta_2) \mapsto H^p(\Pi)}$ defined by
\[
\ell_n(\zeta)= T(\zeta)e_n -
T(\zeta)e_0\left(1-2\left(T(\zeta)e_0\right)^\frac{p}{2}\right)^n
\]
is analytic in $\Omega(\theta_1,\theta_2)$. Since $\{T (t),t \geq
0\}$ is a composition semigroup, each $\ell_n$ vanishes on $\R^+$
by Theorem~\ref{th-charact-comp-operator}. By the uniqueness
theorem, $\ell_n\equiv0$ in $\Omega(\theta_1,\theta_2)$. Hence, by
Theorem~\ref{th-charact-comp-operator}, $T(\zeta)$ is a
composition operator for each $\zeta\in\Omega(\theta_1,\theta_2)$.
This proves (ii)$\Rightarrow$(iii). (Note in passing that the
implication (iii)$\Rightarrow$(ii) is obvious.)

{\it Step 3.} Assertion (iii) implies that there exists a
semigroup $\widetilde{\mathbf{F}}=\{\widetilde{F}_t\}_{t \geq
0}\subset\Hol(\Pi)$ that induces a semigroup of composition
operators $\{\widetilde{C}_t\}$ generated by
$\Gamma^{-\theta_1}:=e^{-i\theta_1}\Gamma$. The inequality
\begin{equation*}
\left|\phi\circ \widetilde{F}_t(z) -\phi(z)\right| \le
\frac{\left\|\widetilde{C}_t\phi-\phi\right\|} {(4\pi\Re z)^{1/p}}
\end{equation*}
(cf. Lemma 3.2 in \cite{Arv}) implies that
$\widetilde{\mathbf{F}}$ is a continuous (hence, generated)
semigroup on $\Hol(\Pi)$. Denote its generator by $\widetilde{f}$.

Since the norm of composition operators $\widetilde{C}_t$ equals
$1$ for all $t$, these operators are bounded on $H^p(\Pi)$. By
\cite[Theorem 3.3]{Arv}, the infinitesimal generator of
$\{\widetilde{C}_t\}$ is $\Gamma^{-\theta_1}\varphi =\varphi'
\widetilde{f}$. On the other hand, $\Gamma^{-\theta_1}\varphi
=e^{-i\theta_1} \Gamma\varphi=e^{-i\theta_1} \varphi' f,$ so
$\widetilde{f}(z)=e^{-i\theta_1}f(z)$. Thus, $\lim\limits_{z \to
\infty}\frac{\widetilde{f}(z)}{z+1}=0$, that is,
$\widetilde{\mathbf{F}}$ is of parabolic type with the
Denjoy--Wolff point at $\infty$. Therefore, $\Re
\widetilde{f}(z)=\Re e^{-i\theta_1}f(z)\geq0$. Similarly, $\Re
e^{i\theta_2}f(z)\ge 0$. These last two inequalities imply (i).
\end{proof}

In fact, the equivalence of assertions (ii) and (iii) to assertion
(i) implies the following fact.

\begin{corol}
If either assertion (ii) or (iii) of Theorem \ref{th-operator-sem}
holds for some $p>1$, then both assertions hold for every $p>1$.
\end{corol}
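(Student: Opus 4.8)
The plan is to exploit the structure of Theorem~\ref{th-operator-sem}: among its three equivalent assertions, assertion~(i) is the only one that makes no reference to the exponent~$p$ at all. Indeed, (i) is a purely geometric condition on the image of the \emph{fixed} generator $f$ together with the opening angles $\theta_1,\theta_2$, whereas (ii) and (iii) refer to the operator $\Gamma=\Gamma_p$ and to the space $H^p(\Pi)$. Moreover, the standing hypotheses of Theorem~\ref{th-operator-sem}, namely $\Re f(z)>0$ for all $z\in\Pi$ and $\lim\limits_{z\to\infty}f(z)/(z+1)=0$, are themselves independent of $p$, so the theorem is applicable to the same generator $f$ simultaneously for every $p\in(1,\infty)$.

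Concretely, I would argue as follows. Suppose that assertion~(ii), or assertion~(iii), of Theorem~\ref{th-operator-sem} holds for some exponent $p_0\in(1,\infty)$, with a pair $\theta_1,\theta_2>0$, $\theta_1+\theta_2\le\pi$. Applying the equivalence (i)$\Leftrightarrow$(ii)$\Leftrightarrow$(iii) of Theorem~\ref{th-operator-sem} with $p=p_0$, we conclude that $f$ satisfies the angular bound~\eqref{gen-in-sector}; that is, assertion~(i) holds for this pair $\theta_1,\theta_2$. Now fix an arbitrary exponent $p\in(1,\infty)$. Since assertion~(i) depends only on $f$ and on $\theta_1,\theta_2$, and not on $p$, it continues to hold; invoking the equivalence of Theorem~\ref{th-operator-sem} once more, this time with the exponent $p$, yields both assertion~(ii) and assertion~(iii) for the operator $\Gamma_p$ on $H^p(\Pi)$. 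As $p$ was arbitrary, both assertions hold for every $p>1$, which is the claim.

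There is essentially no technical obstacle here; the only point worth a word of care is that the operator $\Gamma=\Gamma_p$ and its domain $\mathcal{D}_p$ genuinely vary with $p$, so ``assertion~(ii) (or (iii)) for $p$'' is literally a different statement for each value of $p$ --- but this is exactly the gap that the $p$-free middle assertion~(i) of Theorem~\ref{th-operator-sem} is designed to bridge. I would also note in passing that the pair $(\theta_1,\theta_2)$ transported in this argument is the same for all $p$, so one obtains as a byproduct that the maximal sector $\Omega(\theta_1,\theta_2)$ to which the composition semigroup extends does not depend on the exponent $p\in(1,\infty)$.
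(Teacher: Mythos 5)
Your argument is correct and is exactly the one the paper intends: the corollary is stated as an immediate consequence of the equivalence in Theorem~\ref{th-operator-sem}, using the fact that assertion~(i) is independent of $p$ to transfer (ii) and (iii) from one exponent to all exponents. Your write-up simply makes this one-line observation explicit, including the (valid) remark that the sector $\Omega(\theta_1,\theta_2)$ obtained is the same for every $p\in(1,\infty)$.
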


\begin{remark} We have already seen in the proof of Theorem \ref{th-operator-sem}
that if (ii) holds, then the operator $e^{i\theta} \Gamma$ is
dissipative for every $\theta \in [-\theta_1, \theta_2]$. Thus, by
Theorem \ref{th-flow-dissip}, each function $e^{i \theta}f$ is a
semigroup generator. In addition, by the assumptions of Theorem
\ref{th-operator-sem}, $\lim\limits_{z \to
\infty}\dfrac{e^{i\theta}f(z)}{z+1}=0$. Therefore, by Proposition
\ref{prop-DW}, $\Re e^{i \theta} f(z) \geq 0$ for all $z \in \Pi$.
This shows independently that (i) follows from (ii).
\end{remark}

The next result follows from Theorems
\ref{thm-ector-angle-general} and~\ref{th-operator-sem}.

\begin{corol}\label{thm-operator-sector-angle-general}
Let $\gamma_1, \gamma_2:\R^+ \to (0,\frac\pi2)$ be decreasing
functions. Assume that $f\in\Hol(\Pi,\C)$ satisfies
\[
-\gamma_1(\Re z)\leq \arg f(z)\leq \gamma_2(\Re z)\quad \mbox{for
all}\quad z\in\Pi.
\]
Let $1< p<\infty$.  For each $k>0$, define the linear operator
$\Gamma^k$ on the domain $\mathcal{D}_{p,k}:=\left\{\varphi\in
H^p(\Pi):\ \varphi'f(\cdot+k)\in H^p(\Pi) \right\}$ by
$\Gamma^k\varphi(z) =\varphi'(z) f(z+k)$. Then $\Gamma^k$
generates a semigroup of composition operators which can be
analytically extended to the sector
$\Omega\left(\frac\pi2-\gamma_1(k),\frac\pi2-\gamma_2(k)\right)$.
In particular, if $\lim\limits_{s \to \infty} \gamma_1(s)=
\lim\limits_{s \to \infty} \gamma_1(s)=0,$ then for each
$\varepsilon>0$, there exists a $k>0$ such that the composition
semigroup generated by $\Gamma^k$ is analytic in the sector
$\Omega\left(\frac\pi2-\varepsilon, \frac\pi2-\varepsilon\right).$
\end{corol}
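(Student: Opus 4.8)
The plan is to reduce Corollary \ref{thm-operator-sector-angle-general} to the two results it is advertised to follow from, namely Theorem \ref{thm-ector-angle-general} and Theorem \ref{th-operator-sem}. First I would observe that the hypothesis $-\gamma_1(\Re z)\le \arg f(z)\le \gamma_2(\Re z)$ with $\gamma_1,\gamma_2$ taking values in $(0,\frac\pi2)$ forces $\Re f(z)>0$ on $\Pi$, and that the same translated hypothesis $-\gamma_1(\Re z+k)\le \arg f(z+k)\le \gamma_2(\Re z+k)$ holds for the function $z\mapsto f(z+k)$ on $\Pi$ for every fixed $k>0$; since $\gamma_1,\gamma_2$ are decreasing, one gets the uniform bound $|\arg f(z+k)|\le \max\{\gamma_1(k),\gamma_2(k)\}<\frac\pi2$ there. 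So $f(\cdot+k)$ generates a semigroup, which is nothing but the restricted semigroup $\mathbf{F}^k=\{\Phi_t\}$, $\Phi_t(z)=F_t(z+k)-k$, appearing in Theorem \ref{thm-ector-angle-general}; that theorem tells us $\mathbf{F}^k$ extends analytically in $t$ to the sector $\Omega\!\left(\frac\pi2-\gamma_1(k),\frac\pi2-\gamma_2(k)\right)$.

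Next I would verify that $f_k(z):=f(z+k)$ meets the hypotheses of Theorem \ref{th-operator-sem}: it has positive real part on $\Pi$ (shown above), and I must check $\lim_{z\to\infty} f_k(z)/(z+1)=0$. This does not come for free from the angular bound alone, so here a short argument is needed — the cleanest route is to note that $\mathbf{F}^k$ is of parabolic type because Theorem \ref{thm-ector-angle-general} already provides an analytic extension to a genuine sector, and by Corollary \ref{corol-1} any such semigroup is of parabolic type; a parabolic semigroup generator with Denjoy--Wolff point at $\infty$ satisfies exactly $\angle\lim_{z\to\infty} f_k(z)/(z+1)=0$. (Alternatively one invokes Proposition \ref{prop-DW} in reverse, but the parabolicity route is more direct.) With the hypotheses of Theorem \ref{th-operator-sem} in hand for $f_k$, set $\theta_1=\frac\pi2-\gamma_1(k)$, $\theta_2=\frac\pi2-\gamma_2(k)$; the angular condition \eqref{gen-in-sector} for $f_k$ is precisely assertion (i) of that theorem, so assertion (iii) holds: the semigroup generated by $\Gamma^k\varphi=\varphi' f_k=\varphi'(z)f(z+k)$ on $\mathcal D_{p,k}$ extends analytically to a semigroup of composition operators of norm $1$ on $H^p(\Pi)$ over the sector $\Omega(\theta_1,\theta_2)$. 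This is the assertion to be proved.

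Finally, the ``in particular'' clause is immediate: if $\gamma_1(s),\gamma_2(s)\to 0$ as $s\to\infty$, then given $\varepsilon>0$ I choose $k$ large enough that $\gamma_1(k)<\varepsilon$ and $\gamma_2(k)<\varepsilon$, whence $\frac\pi2-\gamma_i(k)>\frac\pi2-\varepsilon$ and the sector $\Omega\!\left(\frac\pi2-\varepsilon,\frac\pi2-\varepsilon\right)$ is contained in $\Omega(\theta_1(k),\theta_2(k))$; restricting the analytic extension to the smaller sector finishes the argument.

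The only genuine obstacle is the $\lim_{z\to\infty} f(z+k)/(z+1)=0$ verification — every other step is a direct substitution into Theorem \ref{thm-ector-angle-general} and Theorem \ref{th-operator-sem}. I expect that to be handled in one or two lines via Corollary \ref{corol-1} (analytic extendibility to a sector $\Rightarrow$ parabolic type $\Rightarrow$ the required limit), so the proof of the corollary should be quite short, essentially a packaging of the two theorems together with this parabolicity remark.
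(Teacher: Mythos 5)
Your proposal is correct and is essentially the paper's intended argument: the paper offers no proof beyond the one-line remark that the corollary ``follows from Theorems \ref{thm-ector-angle-general} and \ref{th-operator-sem}'', and your reduction (translate by $k$, use the monotonicity of $\gamma_1,\gamma_2$ to get the uniform angular bound, then feed $f(\cdot+k)$ into Theorem \ref{th-operator-sem} with $\theta_i=\frac{\pi}{2}-\gamma_i(k)$) is exactly that packaging. Your extra care in verifying the hypothesis $\lim_{z\to\infty}f(z+k)/(z+1)=0$ via parabolicity (Corollary \ref{corol-1}, or equivalently the observation that a uniform bound $|\arg f|\le\gamma^*<\frac{\pi}{2}$ forces the angular derivative at $\infty$ to vanish) is a genuine detail the paper silently skips; note only that this yields the \emph{angular} limit, which is all that the proof of Theorem \ref{th-operator-sem} actually uses, even though its hypothesis is stated as an unrestricted limit.
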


\begin{examp}
Let us return to the function $f(z)=\dfrac{z+a}{z+b}\,,\ 0<a<b$,
considered in Example~\ref{exam3}. For $k>0$, consider now the
linear operator $\Gamma^k,\ k\ge0,$ defined by
$\Gamma^k:\varphi(z)\mapsto {\varphi'(z)f(z+k)} =\varphi'(z)
\dfrac{z+k+a}{z+k+b}\,.$ By
Corollary~\ref{thm-operator-sector-angle-general}, $\Gamma_k$
generates a semigroup of composition operators which can be
analytically extended to the sector
$\Omega\left(\frac\pi2-\gamma(k),\frac\pi2-\gamma(k)\right)$,
where $\gamma(k)=\arctan\dfrac{b-a}{2\sqrt{(k+a) (k+b)}}\,$ (see
Example~\ref{exam3} above). Similarly to as in that example, for
each $\varepsilon>0$ and $k>\dfrac{b-a-(a+b)\sin\varepsilon}
{2\sin\varepsilon}$, the composition semigroup generated by
$\Gamma^k$ has an analytic extension to the sector
$\Omega\left(\frac\pi2-\varepsilon, \frac\pi2-\varepsilon\right).$
\end{examp}

\begin{corol}\label{last}
Let $\mathbf{T}$ be a $C_0$-semigroup of composition operators on
$H^p(\Pi)$  induced by a semigroup $\mathbf{F}\subset\Hol(\Pi)$.
If $\mathbf{T}$ can be analytically extended to sector, then
$\mathbf{F}$ is of parabolic type.
\end{corol}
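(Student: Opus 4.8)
The plan is to reduce Corollary~\ref{last} to the already-established Theorem~\ref{sector-basic} and Corollary~\ref{corol-1} by first showing that the inducing semigroup $\mathbf{F}$ must have its Denjoy--Wolff point at $\infty$, and then transferring the analytic extension from the operator level down to the mapping level. First I would recall that, by \cite{E-J-12}, a composition operator $C_{F_t}$ is bounded on $H^p(\Pi)$ only if $F_t$ has a finite angular derivative at $\infty$; in particular each $F_t$ must fix $\infty$ in the sense that $\angle\lim_{z\to\infty}F_t(z)=\infty$. Hence $\mathbf{F}$ cannot have a Denjoy--Wolff point in $\overline{\Pi}$, and (excluding the trivial identity/elliptic cases, which do not induce $C_0$-semigroups of the stated kind) the Denjoy--Wolff point of $\mathbf{F}$ is $\infty$.

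Next, suppose $\mathbf{T}$ extends analytically to a semigroup $\{T(\zeta)\}_{\zeta\in\Omega}$ on some sector $\Omega$. The key step is to produce from this an analytic extension of $\mathbf{F}$ itself. For this I would invoke Theorem~\ref{th-charact-comp-operator}: for $\zeta\in\Omega$ the operator $T(\zeta)$ is a limit (in the appropriate operator topology, or at least pointwise on the dense set spanned by the $e_n$) of the composition operators $T(t)$, $t\in\R^+$, which all satisfy conditions (a)--(c); the functions $\zeta\mapsto T(\zeta)e_n$ are analytic, so the relations in (c) persist on all of $\Omega$ by the uniqueness theorem, and likewise $F_\zeta:=1/(T(\zeta)e_0)^{p/2}-1$ is a well-defined holomorphic function of $z$ depending analytically on $\zeta$. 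This exhibits $\mathbf{F}$ as analytically extendable to $\Omega$ with $F_\zeta\in\Hol(\Pi)$, and boundedness of $T(\zeta)$ forces (via \cite{E-J-12} again) $F_\zeta$ to have finite angular derivative at $\infty$, hence $F_\zeta(z)\to\infty$ along the relevant rays. Then Corollary~\ref{corol-1} applies verbatim and yields that $\mathbf{F}$ is of parabolic type.

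The main obstacle I anticipate is the second step: carefully justifying that analyticity of $\zeta\mapsto T(\zeta)$ as operators really descends to analyticity of $\zeta\mapsto F_\zeta$ as a family of self-maps of $\Pi$, and that the limiting relations of Theorem~\ref{th-charact-comp-operator} are preserved under the passage $t\to\zeta$. The cleanest route is to note that $T(\zeta)e_0\in H^p(\Pi)$ varies analytically in $\zeta$, that it is nonvanishing (being $e_0\circ F_t$ on $\R^+$ and hence, by the identity principle applied to the analytic family, nonvanishing on $\Omega$), so that $F_\zeta=1/(T(\zeta)e_0)^{p/2}-1$ is well defined and holomorphic in $z$ with holomorphic dependence on $\zeta$; the semigroup property $F_{\zeta_1}\circ F_{\zeta_2}=F_{\zeta_1+\zeta_2}$ then follows from $T(\zeta_1)T(\zeta_2)=T(\zeta_1+\zeta_2)$ together with $T(\zeta)\phi=\phi\circ F_\zeta$, the latter itself obtained from Theorem~\ref{th-charact-comp-operator} once conditions (a)--(c) are verified on $\Omega$. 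Once this bookkeeping is in place, the conclusion is immediate from Corollary~\ref{corol-1}, since a semigroup with Denjoy--Wolff point at $\infty$ admitting an analytic extension to a sector along which trajectories tend to $\infty$ is necessarily parabolic.
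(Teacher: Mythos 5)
There is a genuine gap, and it occurs at the very first step. From the boundedness of each $C_{F_t}$ you correctly deduce (via \cite{E-J-12}) that each $F_t$ has a finite angular derivative at $\infty$, but the inference ``hence the Denjoy--Wolff point of $\mathbf{F}$ is $\infty$'' is false: a self-map can fix $\infty$ with a finite angular derivative while its Denjoy--Wolff point lies elsewhere. The dilation semigroup $F_t(z)=e^{-t}z$ is a concrete counterexample: its Denjoy--Wolff point is $0$, yet every $F_t$ fixes $\infty$ with angular derivative $e^{t}$ and induces a bounded composition operator on $H^p(\Pi)$ (indeed $\|T(t)\|=e^{t/p}$, so $\mathbf{T}$ is a perfectly good $C_0$-semigroup). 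Since Corollary~\ref{corol-1} is stated only for semigroups with Denjoy--Wolff point at $\infty$, your reduction collapses unless you separately exclude the case of a finite Denjoy--Wolff point (equivalently, a negative angular derivative $\delta$ of $f$ at $\infty$). The paper's proof is built precisely to handle this: boundedness gives that $\delta=\angle\lim_{z\to\infty}f(z)/z$ exists and is \emph{real}; analyticity of $\mathbf{T}$ in a sector yields (via \cite{A-C-P} and Theorem~\ref{th-flow-dissip}) that $e^{i\theta}f$ is again a semigroup generator for some small $\theta>0$, whose angular derivative $e^{i\theta}\delta$ must also be real; hence $\delta=0$, which simultaneously forces the Denjoy--Wolff point to $\infty$ and the type to be parabolic. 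This is both shorter and avoids the descent to the mapping level altogether.

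Your second step also does not close as written. To conclude from Theorem~\ref{th-charact-comp-operator} that $T(\zeta)$ is a composition operator you must verify conditions (a) and (b), not only (c). Condition (c) does persist on $\Omega$ by the uniqueness theorem (this is exactly the $\ell_n$ argument in the proof of Theorem~\ref{th-operator-sem}), but (a) requires $T(\zeta)e_0$ to be zero-free \emph{and} the resulting $F_\zeta$ to map $\Pi$ into $\Pi$; neither follows from ``the identity principle applied to the analytic family,'' since non-vanishing of $z\mapsto T(\zeta)e_0(z)$ is not a property preserved under analytic continuation in the parameter $\zeta$. Likewise, invoking \cite{E-J-12} to obtain the finite angular derivative in (b) is circular, because that result applies only after one already knows $T(\zeta)$ is a composition operator. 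In the paper these facts are not obtained by continuation alone: in Theorem~\ref{th-operator-sem} the operators $T(te^{i\theta})$ are first identified, via the Lumer--Phillips theorem, Theorem~\ref{th-flow-dissip} and \cite{Arv}, with the composition semigroup generated by $e^{i\theta}f$, which is what supplies (a) and (b). If you want to salvage your route, you would need to import that machinery, at which point you have essentially reconstructed the paper's argument by a longer path.
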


\begin{proof}
Since the elements of $\mathbf{T}$ are bounded, the elements of
$\mathbf{F}$ have finite angular derivatives at $\infty$.
Therefore, the generator $f$ of $\mathbf{F}$ has also a finite
angular derivative $\delta$ at $\infty$, and this derivative is
known to be real (cf. \cite{SD}). It follows by results in
\cite{A-C-P} and Theorem~\ref{th-flow-dissip} above that there
exists small enough $\theta>0$ such that the function
$f_\theta:=e^{i\theta}f$ is also a semigroup generator. The
angular derivative of $f_\theta$ at $\infty$ equals
$e^{i\theta}\delta$, which is real only if $\delta=0$. So,
$\mathbf{F}$ is of parabolic type.
\end{proof}

\vspace{2mm}
%%%%%%%%%%%%%%%%%%%%%%%%%%%%%%%

%%%%%%%%%%%%%%%%%%%%%%%%%%%%%%%

\end{document}